\newtheorem{thm}{Theorem}[section]
\newtheorem{cor}[thm]{Corollary}
\newtheorem{prop}[thm]{Proposition}
\theoremstyle{definition}
\newtheorem{Def}[thm]{Definition}
\newtheorem{rem}[thm]{Remark}
\newtheorem*{ack}{Acknowledgement}
\numberwithin{equation}{section}
\numberwithin{figure}{section}
\def\Hom{{\text{\rm{Hom}}}}
\def\rchi{{\hbox{\raise1.5pt\hbox{$\chi$}}}}
\def\Aut{{\text{\rm{Aut}}}}
\def\a{\alpha}
\def\b{\beta}
\def\gam{\gamma}
\def\Gam{\Gamma}
\newcommand{\M}{\overline{\MM}}       
\newcommand{\MM}{\mathcal{M}}          
\newcommand{\mgn}{\MM_{g,n}}
\newcommand{\mgnbar}{\M_{g,n}}
\newcommand{\cl}[1]{[\![{#1} ]\!]}
\newcommand{\Mbar}{{\overline{\mathcal{M}}}}
\newcommand{\bP}{{\mathbb{P}}}
\newcommand{\bC}{{\mathbb{C}}}
\newcommand{\bZ}{{\mathbb{Z}}}
\newcommand{\cM}{{\mathcal{M}}}
\newcommand{\cN}{{\mathcal{N}}}
\newcommand{\cC}{{\mathcal{C}}}
\newcommand{\cD}{{\mathcal{D}}}
\newcommand{\cF}{{\mathcal{F}}}
\newcommand{\cW}{{\mathcal{W}}}
\newcommand{\cw}{{\mathfrak{w}}}
\newcommand{\cX}{{\mathcal{X}}}
\newcommand{\la}{{\langle}}
\newcommand{\ra}{{\rangle}}
\newcommand{\half}{{\frac{1}{2}}}
\newcommand{\lrar}{\longrightarrow}
\begin{document}
\large
\setcounter{section}{0}

\title[Topological recursion and topological quantum field theory]
{Topological recursion, topological quantum field theory and Gromov-Witten invariants of BG}

\author[D.\ Hern\'andez Serrano]{Daniel Hern\'andez Serrano}
\address{
Department of Mathematics and IUFFYM\\
University of Salamanca\\
Salamanca 37008, Spain}
\email{dani@usal.es}

\begin{abstract}
The purpose of this paper is to give a twisted version of the Eynard-Orantin topological recursion of \cite{EO1} by a 2D Topological Quantum Field Theory. We shall define a kernel for a 2D TQFT and use an algebraic definition for a topological recursion to define how to twist a standard topological recursion by a 2D TQFT. The A-model side enumerative problem consists of counting cell graphs where in addition vertices are decorated by elements in a Frobenius algebra, and which are a twisted version of the generalized Catalan numbers of \cite{DMSS,MS}.  We show that the function which counts these decorated graphs, which is a twist of the counting function of \cite{DMSS} by a Frobenius algebra, satisfies the same type of recursion  of \cite{CMS} with respect to the edge-contraction axioms of \cite{OM3}. The path we follow to pass from the A-model side to the remodelled B-model side is to use a discrete Laplace transform as a mirror symmetry map, based on the ideas of \cite{EMS,MZ,CMS,MS, MP2012}. We show that a twisted version by a 2D TQFT of the Eynard-Orantin differentials satisfies a twisted generalization of the topological recursion formula of \cite{EO1}. We shall illustrate these results with a toy model for the theory arising from the orbifold cohomology of the classifying space of a finite group. In this example, the graphs are orbifold cell graphs (graphs drawn on an orbifold punctured Riemann surface) defined out of the moduli space $\Mbar_{g,n}(BG)$ of stable morphisms from twisted curves to the classifying space of a finite group $G$. In particular we show that the cotangent class intersection numbers on the moduli space $\Mbar_{g,n}(BG)$ satisfy a twisted Eynard-Orantin topological recursion and we derive an orbifold DVV equation as a consequence of it. This proves from a different perspective the main result of \cite{JK}, which states that the  $\psi$-class intersection numbers on  $\Mbar_{g,n}(BG)$ satisfy the Virasoro  constraint condition.
\end{abstract}

\subjclass[2010]{Primary: 14N35, 14N10, 57R56, 81T45;
Secondary: 05C30, 55N32, 53D37}

\keywords{Topological quantum field 
theory; topological recursion; Frobenius 
algebras; ribbon graphs; orbifold cohomology; Gromov-Witten invariants}

\maketitle

\allowdisplaybreaks

\section{Introduction}
\label{sect:intro}

The recent formalism of a topological recursion given by Eynard-Orantin in \cite{EO1} has been a rich and powerful theory to interconnect different areas of mathematics and physics lately. Many of the uses of their recursion formula are based on the remodeling conjecture of \cite{BKMP}, which proposes this theory as a tool to compute the open Gromov-Witten invariants of a Calabi-Yau threefold when using its mirror curve as the spectral curve of Eynard-Orantin. The ideas of \cite{EMS,MZ,CMS,MS, MP2012} had contributed to solve many of its applications and to accept the Laplace transform as a mirror symmetry map, in the sense that the Laplace transform of many enumerative problems on the A-model side satisfies the Eynard-Orantin recursion on the B-model side for a particular choice of the spectral curve. Many examples has been proved before the conjecture itself were solved in \cite{FLZ,EO3}, such as counting lattice points of $\mgn$ \cite{CMS,N1,N2,MP2012}, single and orbifold Hurwitz numbers \cite{EMS,MZ,BHLM}, the Weil-Petersson volume of $\mgnbar$ \cite{EO2,MS,LX2,Mir1,Mir2}, the generalised Catalan numbers \cite{DMSS}, the stationary Gromov-Witten theory of $\bP^1$ \cite{DOSS} or the case of topological vertex \cite{C,Zhou2} among others. 

Recently, a new set of axioms for a 2D TQFT are given in \cite{OM3} an proved to be equivalent to the standard TQFT rules. One of the key points of this approach is that they transform the classic TQFT rules into new rules which reflects a reduction by $1$ of the topological quantity $2g-2+n$, which is one of the conditions needed for a topological recursion to be satisfied. Thus, it is natural to wonder if a 2D TQFT can be included into the topological recursion  formalism of Eynard-Orantin. 
The answer we propose in this paper consist of giving an algebraic version of the Eynard-Orantin topological recursion twisted by a 2D TQFT and prove that the Laplace transform of a twisted version of the Catalan numbers by a Frobenius algebra satisfy this new twisted topological recursion. We shall also provide a toy model for the theory by giving an example based on the orbifold cohomology of the classifying space of a finite group as a Frobenius algebra.

The paper is organized as follows. In section \ref{s:FA} we review the definitions of Frobenius algebra, 2D TQFT and ECA axioms. Section \ref{s:TWTR} is devoted to give a twisted version of a topological recursion by a 2D TQFT. We shall define kernel and cokernel operators for a 2D TQFT by using the product and coproduct of a finite dimensional commutative Frobenius algebra. We introduce an algebraic definition for a topological recursion to extend this operators and define a twisting of a standard topological recursion by a 2D TQFT. We show in section \ref{s:TWCAT} that a twisted generalization of the Catalan numbers satisfies a topological recursion. The A-model side enumerative problem consists of counting cell graphs where in addition vertices are decorated by elements in a Frobenius algebra, which are a twisted version of the generalized Catalan numbers of \cite{DMSS,MS} by a 2D TQFT. ECA axioms of \cite{OM3} allow us to show that the function which counts these decorated graphs satisfies the same type of recursion of \cite{CMS}. The Laplace transform of this recursion is the twisted version of the Eynard-Orantin topological recursion by a 2D TQFT proposed in the previous section.
Section \ref{s:GWBG} relates these results with the Gromov-Witten invariants of the classifying space $BG$ of a finite group $G$. We give an example arising from the orbifold cohomology of $BG$, where the decorated cell graphs are graphs drawn on an orbifold punctured Riemann surface defined out of the moduli space $\Mbar_{g,n}(BG)$ of stable morphisms from twisted curves to the classifying space of a finite group $G$, and which are given as an orbifold generalization of Grothendieck designs d'enfants. We generalize the lattice point counting of \cite{MP2012} to this orbifold setting and by taking the Laplace transform of the resulting recursion equation we show that  a twisted topological recursion by the 2D TQFT given by the orbifold cohomology of $BG$ as Frobenius algebra is satisfied.  This provide us with an orbifold DVV equation, which shows from a different perspective the main result of \cite{JK}: the $\psi$-class intersection numbers on $\Mbar_{g,n}(BG)$ satisfy the Virasoro constraint condition. We conclude in section \ref{s:proof} with the proof of Theorem \ref{thm:main2}.

\section{Frobenius algebras, 2D TQFT and ECA axioms.}
\label{s:FA}

In this section we review some definitions. We address the reader to follow \cite{Kock} for the notion of Frobenius algebra and its relation with $2$-dimensional topological quantum field theory (2D TQFT), \cite{At} for the mathematical definition of TQFT and \cite{OM3} for the edge-contraction axioms on cell graphs.

Let $A$ be a commutative Frobenius algebra over a field $K$ and let us denote:
\begin{itemize}
\item The product: 
\begin{align*}
m\colon A \otimes A &\to A \\
(u,v)&\mapsto m(u,v)\overset{not}{=}u\cdot v
\end{align*}
\item The non-degenerate symmetric bilinear form:
\begin{align*}
\eta \colon A \otimes A &\to K\\
(u,v)&\mapsto \eta(u,v)
\end{align*}
\item The Frobenius form:
\begin{align*}
\epsilon \colon A & \to K\\
u &\mapsto \epsilon(u):=\eta(1,u)
\end{align*}
\end{itemize}
There exists a unique coassociative coproduct $\delta \colon A \to A\otimes A$ whose counit is the Frobenius form $\epsilon \colon A \to K$ and which satisfies the Frobenius relation $$\delta\circ m=(\delta \otimes 1)\circ (1\otimes m)$$
In order to define the coproduct, let us introduce the three-point function
\begin{align*}
\phi \colon A\otimes A \otimes A &\to K\\
(u,v,w)&\mapsto \phi(u,v,w):=\eta(u\cdot v,w)=\eta(u,v\cdot w) 
\end{align*}

In terms of a $K$-basis $\{e_{1},\dots ,e_{s}\}$ of $A$ and the standard formula $u=\sum_{a,b}\eta(v,e_{a})\eta^{ab}e_{b}$ we can then write:
$$u\cdot v=\sum_{a,b}\phi(u,v,e_{a})\eta^{ab}e_{b}$$
$$\delta(v)=\sum_{i,j,a,b}\phi(v,e_{i},e_{j})\eta^{ia}\eta^{jb}e_{a}\otimes e_{b}\,,$$
where $\eta_{ij}:=\eta(e_{i},e_{j})$ and $\eta=(\eta_{ij})_{i,j}$ is the associated symmetric matrix, whose inverse is denoted $\eta^{-1}=(\eta^{ij})_{i,j}$. 

The last interesting operator (since the genus of a Riemann surface shall be codified on it) is the handle operator 
\begin{equation}
\label{e:handleop}
h \colon A \overset{\delta}{\to}A\otimes A \overset{m}{\to }A
\end{equation}
The image of $1\in A$ is the Euler element $\bold e=(m\circ \delta)(1)$.

\begin{Def}\cite{At, Kock}
A 2D TQFT is a rule $F$ which associates to each closed oriented $1$-manifold $\Sigma$ a vector space $A=F(\Sigma)$, and to each oriented cobordism $M\colon \Sigma_{1}\mapsto \Sigma_{2}$ associates a linear map $F(M)\colon F(\Sigma_{1})\to F(\Sigma_{2})$. This rule must satisfy:
\begin{itemize}
\item Two equivalent cobordisms must have the same image.
\item The cylinder cobordism from $\Sigma$ to itself must be sent to the identity map of $F(\Sigma)$.
\item Given a decomposition $M=M'M''$ then $F(M)$ is the composition of the linear maps $F(M')$ and $F(M'')$.
\item Disjoint union goes to tensor product, for $1$-manifolds and also for cobordisms. 
\item The empty manifold must be sent to the ground field $K$.
\item Takes the symmetry to the symmetry.
\end{itemize}
\end{Def}

Let $\Sigma_{g,n}$ be an oriented surface of type $(g,n)$ with labeled boundary components by indices $1,\dots ,n$. Let $A=F(S^{1})$ and $\Omega_{g,n}:=F(\Sigma_{g,n})\colon A^{\otimes n}\to K$ the associated multilinear map. We shall denote the associated 2D TQFT to a Frobenius algebra $A$ by the tuple $(A,\eta,\{\Omega_{g,n}\in A^{\otimes n*}\})$.

\begin{rem}
Let $\mgnbar$ be the moduli space of stable genus $g$ curves with $n$ marked points. A TQFT can be thought of as a Cohomological Field Theory which takes values in $H^{0}(\mgnbar,K)=K$.
\end{rem}

In \cite{OM3} a new set of rules for a 2D TQFT are given in terms of edge-contraction operations on cell-graphs, and proved to be equivalent to the standard set of axioms for a 2D TQFT. For the shake of completeness, let us include here these set of axioms. 

Let $\Gamma_{g,n}$ be the set of connected cell graphs of type $(g,n)$ with labeled vertices. Recall that a cell graph of type $(g,n)$ is a $1$-skeleton of a cell-decomposition of a connected compact oriented topological surface of genus $g$ with $n$ labeled $0$-cells, where a $0$-cell is called a vertex, a $1$-cell an edge and a $2$-cell a face (see \cite{DMSS} for details). For each cell graph $\gamma \in \Gamma_{g,n}$ let 
\begin{align*}
\Omega(\gamma)\colon A^{\otimes n}&\to \bC\\
v_{1}\otimes \dots \otimes v_{n}&\mapsto \Omega(\gamma)(v_{1}, \dots ,v_{n})
\end{align*}
be a multilinear map which decorates the $i$-th vertex of $\gamma$ with an element $v_{i}\in A$.

\begin{Def}\cite[Definition 4.4]{OM3}
\label{def:ECA}
The \textbf{edge-contraction axioms}.

\begin{itemize}
\item \textbf{ECA 0}: For the  cell graph consisting of only one vertex without any edge, $\gam_0\in \Gam_{0,1}$, we define
\begin{equation}\label{ECA0}
\Omega(\gam_0)(v) = \epsilon(v), \qquad v\in A.
\end{equation}

\item \textbf{ECA 1}: Suppose there is an edge $E$ connecting the $i$-th vertex and the $j$-th vertex for $i<j$ in $\gam\in \Gam_{g,n}$. Let $\gam'\in \Gam_{g,n-1}$ denote the cell graph
obtained by contracting $E$. Then
\begin{equation}\label{ECA1}
\Omega(\gam)(v_1,\dots,v_n) = \Omega(\gam')(v_1,\dots,v_{i-1}, v_i\cdot v_j,v_{i+1},\dots, \widehat{v_j},\dots,v_n),
\end{equation}
Here $\widehat{v_j}$ means we omit the $j$-th variable $v_j$ at the $j$-th vertex of $\gam$.
\item \textbf{ECA 2}: Suppose there is a loop $L$ attached at the $i$-th vertex of $\gam\in\Gam_{g,n}$.
Let $\gam'$ denote the possibly  disconnected graph obtained by contracting $L$ and separating the vertex
to two distinct vertices labeled by $i$ and $i'$. We assign an ordering $i-1<i<i'<i+1$.

If $\gam'$ is connected, then it is in $\Gam_{g-1,n+1}$.
We then impose
\begin{equation}\label{ECA2-1}
\Omega(\gam)(v_1,\dots,v_n) = \Omega(\gam')(v_1,\dots,v_{i-1},\delta(v_i),v_{i+1},\dots,v_n),
\end{equation}
where the outcome of the comultiplication $\delta(v_i)$ is placed in the $i$-th and $i'$-th slots.

If $\gam'$ is disconnected, then write $\gam'=(\gam_1,\gam_2)\in \Gam_{g_1,|I|+1} \times \Gam_{g_2,|J|+1}$, where 
\begin{equation}\label{disconnected}
\begin{cases}
g=g_1+g_2\\
I\sqcup J = \{1,\dots,\widehat{i},\dots,n\}
\end{cases}.
\end{equation}
Here, vertices labeled by $I$ belong to the connected component of genus $g_1$, and those labeled by $J$ on the other component. Let $(I_-,i,I_+)$ (reps. $(J_-,i,J_+)$) be reordering of $I\sqcup \{i\}$ (resp. $J\sqcup \{i\}$) in the increasing order. We impose
\begin{equation}\label{ECA2-2}
\Omega(\gam)(v_1,\dots,v_n) = \sum_{a,b,k,\ell}\eta(v_i,e_ke_\ell)\eta^{ka}\eta^{\ell b}\Omega(\gam_1)(v_{I_-},e_a,v_{I_+})\Omega(\gam_2)(v_{J_-},e_b,v_{J_+}).
\end{equation}
\end{itemize}
\end{Def}

Theorem $3.8$ and Collorary 4.8 of \cite{OM3} prove that given a Frobenius algebra $A$, the standard axioms of 2D TQFT and the ECA axioms are equivalent, moreover they have:
$$\Omega_{g,n}(v_{1},\dots, v_{n})=\epsilon(v_{1}\cdots v_{n}\cdot \bold e^{g})=\Omega(\gamma)(v_{1},\dots, v_{n})\,,$$
where $\bold e^{g}$ is the $g$-th power of the Euler element.

\section{Twisted Topological Recursion by a 2D TQFT}\label{s:TWTR}

\subsection{Topological Recursion.}\label{ss:EOrec}\qquad 

We address the reader to look at the topological recursion of Eynard-Orantin in \cite{EO1}, and to the special case of genus $0$ spectral curve to the mathematical definition given in \cite{DMSS}. In this subsection instead, we shall give an algebraic approach which shall be used to define how to twist a topological recursion by a 2D TQFT.

Let $\Sigma$ be a spectral curve and let us denote $V=H^{0}\big(\Sigma,K_{\Sigma}(*R)\big)$ the space of meromorphic differentials on $\Sigma$ (where $R$ is the set of ramification points of the spectral curve) and let us write $V^{n}=Sym^{n}H^{0}\big(\Sigma,K_{\Sigma}(*R)\big)$. 

\begin{Def}\label{d:TRKernel}
We define a topological recursion \emph{kernel operator} as the following map:
\begin{align*}
K\colon V \otimes V &\to V \\
(f_{0},f_{1}) & \mapsto K(f_{0},f_{1})\,,
\end{align*}
\end{Def}

which can naturally be extended to
\begin{align*}
K\colon V \otimes V \otimes V^{n-1}&\to V  \otimes V^{n-1}\\
(f_{0},f_{1},f_{2},\dots ,f_{n})&\mapsto \big(K(f_{0},f_{1}),f_{2},\dots ,f_{n}\big)
\end{align*}
\begin{align*}
K\colon V \otimes V^{|I|}\otimes V \otimes V^{|J|}&\to V  \otimes V^{|I\sqcup J|}\\
(f_{0},f_{I},f_{1},f_{J})&\mapsto \big(K(f_{0},f_{1}),f_{I},f_{J}\big)
\end{align*}

\begin{Def}\label{d:TR}
Let $(g,n)$ be a pair in the stable range. The meromorphic differentials $W_{g,n}\in V^{n}$ are said to satisfy a topological recursion (TR) w.r.t. the spectral curve $\Sigma$ and kernel $K$ if:
$$W_{g,n}=K(W_{g-1,n+1})+\frac{1}{2}\sum_{\substack{g_1+g_2=g\\
I\sqcup J=\{2,\dots,n\}}} ^{\text{no (0,1)}}K(W_{g_{1},|I|+1},W_{g_{2},|J|+1})\,.$$
It shall be called a Eynard-Orantin topological recursion (EO TR) when a explicit form of a EO kernel is chosen.
\end{Def}

\begin{rem}
The unstable differentials are also defined for a topological recursion by using the spectral curve, see \cite{EO1,DMSS} for details.
\end{rem}

\subsection{The kernel and the cokernel operators in a TQFT}\label{ss:CohFTKernel}\quad 

In this subsection we shall define ``kernel and cokernel operators'' in a 2D TQFT by using the coproduct and the product of the Frobenius algebra. This will be a useful tool later on to intrinsically define a 2DTQFT-twisted topological recursion. 

Let $(A,\eta,\Omega_{g,n}\in A^{\otimes n *} )$ be a 2D TQFT. Let us start with the coproduct in the Frobenius algebra and consider the composite $$A \overset{\delta}{\to }A_{1}\otimes A_{2}\overset{\tau}{\to}A_{2}\otimes A_{1}\,,$$
where $A_{i}=A$, $\delta$ is the coproduct and $\tau$ is the standard twist isomorphism operator interchanging the first and second factors. Consider the dual map:
$$(A_{2}\otimes A_{1})^{*}\simeq A_{1}^{*}\otimes A_{2}^{*}\to A^{*}$$
 which by abuse of notation we shall still denote $\delta^{*}\colon A^{*}\otimes A^{*}\to A^{*}$. 

\begin{Def}\label{d:coprodKernel}
We define the \emph{kernel operator} as $\delta^{*}\colon A^{*}\otimes A^{*}\to A^{*}$. Which can be naturally extended to:
$$\delta^{*}\colon A^{*}\otimes A^{*}\otimes (A^{*})^{n-1}\otimes H^{0}(\M_{g-1,n+1})\to A^{*}\otimes (A^{*})^{n-1}\otimes H^{\bullet}(\mgnbar)$$
$$\delta^{*}\colon A^{*}\otimes A^{*}\otimes (A^{*})^{|I|}\otimes (A^{*})^{|J|}\otimes H^{0}(\M_{g_{1},|I|+1})\otimes H^{0}(\M_{g_{2},|J|+1})\to A^{*}\otimes (A^{*})^{|I\sqcup J|}\otimes H^{0}(\mgnbar)\,,$$
and which we shall still denote by $\delta^{*}$. 
\end{Def}
\begin{rem}
Even if $H^{0}(\mgnbar,K)=K$, we want now to leave it in the definition in order to keep truck of the topological type and to show how this definition can be generalized to Cohomological Field Theory.
\end{rem}

In this fashion we have that the equation
$$\delta^{*}(\Omega_{g-1,n+1})=\Omega_{g,n}$$
is equivalent to {\bf ECA 2} axiom of equation (\ref{ECA2-1})
$$\Omega_{g,n}(v_{1},\dots ,v_{n})=\Omega_{g-1,n+1}(\delta(v_{1}),v_{[n]\setminus \{1\}})\,.$$

Similarly, 
$$\delta^{*}(\Omega_{g_{1},|I|+1},\Omega_{g_{2},|J|+1})=\Omega_{g,n}$$ 
produces {\bf ECA 2} axiom of equation (\ref{ECA2-2})
$$\Omega_{g,n}(v_{1},\dots ,v_{n})=\sum_{a,b,k,\ell}\phi(v_i,e_k,e_\ell)\eta^{ka}\eta^{\ell b}\Omega_{g_{1},|I|+1}(v_{I_-},e_a,v_{I_+})\Omega_{g_{2},|J|+1}(v_{J_-},e_b,v_{J_+})\,.$$

In an analogous way, we can start with the product $m\colon A \otimes A \to A$ and define a cokernel operator.
\begin{Def}\label{d:prodKernel}
We define the \emph{cokernel operator} as $m^{*}\colon A^{*} \to A^{*}\otimes  A^{*}$, naturally extended to:
$$m^{*}\colon A^{*}\otimes (A^{*})^{n-2}\otimes H^{0}(\M_{g,n-1})\to A^{*}\otimes A^{*}\otimes (A^{*})^{n-2}\otimes H^{0}(\mgnbar)\,.$$
\end{Def}

We have that 
$$m^{*}(\Omega_{g,n-1})=\Omega_{g,n}$$
is equivalent {\bf ECA 1} axiom of equation (\ref{ECA1})
$$\Omega_{g,n}(v_{1},\dots ,v_{n})=\Omega_{g,n-1}(v_{1}\cdot v_{j},v_{[n]\setminus \{1,j\}})\,.$$

Finally, let us finish this section by pointing out the relation between the kernel and the cokernel operators. 
In the Frobenius algebra $A$ we have the following identity:
$$m=(1\times \eta)\circ (\delta \times 1)$$
$$m\colon A \otimes A \overset{\delta \times 1}{\to}A \otimes A \otimes A \overset{1\times \eta}{\to} A$$

Let us define the $(0,2)$ unstable number by using the pairing $\eta$:
$$\Omega_{0,2}(v_1,v_2):=\eta(v_1,v_2)$$
Then we have that 
\begin{equation}\label{e:prodKcoprodK}
m_{*}(\Omega_{g,n-1})=\delta^{*}(\Omega_{g,n-1},\Omega_{0,2})\,.
\end{equation}
Therefore, and once the unstable $(0,2)$ case is consistently defined in the theory, we could just use the ``kernel operator''.

\subsection{Twisting a topological recursion by a 2D TQFT}\qquad

\begin{Def} We define the \emph{twisted Kernel}, $K_{\delta}$, as the following product of the TR kernel of Definition \ref{d:TRKernel} with the kernel of Definition \ref{d:coprodKernel}:
\begin{align*}
K_{\delta}\colon (V\otimes A^{*})\otimes (V\otimes A^{*})&\overset{K\times \delta^{*}}{\to} (V\otimes A^{*})\\
\big((f_{0},\Omega_{i}),(f_{1},\Omega_{j})\big)&\mapsto \big(K(f_{0},f_{1}),\delta^{*}(\Omega_{i},\Omega_{j})\big)\,.
\end{align*}
We extend the twisted kernel to:
$$(V\otimes A^{*})\otimes (V\otimes A^{*})\otimes (V^{n-1}\otimes (A^{*})^{n-1})\overset{K_{\delta}}{\to} (V\otimes A^{*})\otimes (V^{n-1}\otimes (A^{*})^{n-1})$$

$$(V\otimes A^{*})\otimes (V\otimes A^{*})\otimes (V^{|I|}\otimes (A^{*})^{|I|})\otimes (V^{|J|}\otimes (A^{*})^{|J|})\overset{K_{\delta}}{\to}(V\otimes A^{*})\otimes (V^{n-1}\otimes (A^{*})^{n-1})$$

\end{Def}

\begin{Def}\label{d:twEODiff}
Let $(g,n)$ be a stable pair. We \emph{define} the \emph{twisted-meromorphic differentials} as elements $\cW_{g,n}\in V^{n}\otimes (A^{*})^{n}$. 
\end{Def}

\begin{Def}\label{d:twTR}
Let $(g,n)$ be a pair in the stable range. The twisted meromorphic differentials $\cW_{g,n}\in (V\otimes A^{*})^{n}$ are said to satisfy a topological recursion w.r.t. the spectral curve $\Sigma$ and twisted kernel $K_\delta=K\times \delta^*$ if:
$$\cW_{g,n}=K_{\delta}(\cW_{g-1,n+1})+\frac{1}{2}\sum_{\substack{g_1+g_2=g\\
I\sqcup J=\{2,\dots,n\}}} ^{\text{no (0,1)}}K_{\delta}(\cW_{g_{1},|I|+1},\cW_{g_{2},|J|+1})\,.$$
It shall be called a Eynard-Orantin twisted topological recursion when a explicit form of a EO kernel for $K$ is chosen.
\end{Def}

Let us point out that there is no $(0,1)$ terms on the last summand, but there are $(0,2)$ appearing as:
$$K_{\delta}(\cW_{g,n-1},\cW_{0,2})\,.$$
Using the cokernel $m^{*}$ of Definition \ref{d:prodKernel}, we can define a \emph{twisted cokernel} by $K_{*}:=K\times m^{*}$. Thus, by equation (\ref{e:prodKcoprodK}), and once we define $\cW_{0,2}(f_{1},f_{2};v_{1},v_{2}):=W_{0,2}(f_{1},f_{2})\Omega_{0,2}(v_{1},v_{2})$, we have that:
$$K_{*}(\cW_{g,n-1})=K_{\delta}(\cW_{g,n-1},\cW_{0,2})\,,$$
and thus, the TR could be also written as:
$$\cW_{g,n}=K_{*}(\cW_{g,n-1})+K_{\delta}(\cW_{g-1,n+1})+\frac{1}{2}\sum_{\substack{g_1+g_2=g\\
I\sqcup J=\{2,\dots,n\}}} ^{\text{stable}}K_{\delta}(\cW_{g_{1},|I|+1},\cW_{g_{2},|J|+1})\,.$$

\begin{rem}
These definitions can be naturally extended to Cohomological Field Theories and related with \cite{ABO,DOSS}. Properties and details shall be studied elsewhere. 
\end{rem}

\section{Twisted Topological Recursion for twisted Catalan numbers.}
\label{s:TWCAT}

\subsection{Background: Designs d'enfants}\quad

A \emph{dessins d'enfant} of type $(g,n)$ is a topological graph drawn on a genus $g$ connected smooth algebraic curve $C$ which is defined as the inverse image $b^{-1}([0,1])$ of the closed interval $[0,1]\subset\bP^1$ by a clean Belyi map $b:C\lrar \bP^1$ (a meromorphic morphism ramified at three points $\{0,1,\infty\}$ such that $n$ is the number of poles of $b$ without counting the multiplicity, the ramification type of $b$ above $1\in \bP^1$ is $(2,2,\dots,2)$). They are a special kind of metric ribbon graphs and the enumeration of clean Belyi  morphism is equivalent to the enumeration of certain ribbon graphs.

A \emph{dessin} is defined as the dual graph $\gamma = b^{-1}([1,i\infty])$,  where $[1,i\infty]= \{1+iy\;|\;0\le y\le \infty\}\subset \bP^1$. It has  $n$ labeled vertices and it is a connected cell graph. The number of
dessins with the automorphism factor is defined by
\begin{equation}
\label{eq:dessin count}
D_{g,n}(\mu_1,\dots,\mu_n) = \sum_{\substack{\gamma \text{ dessin of}\\
\text{type } (g,n)}} \frac{1}{|\Aut_D(\gamma)|},
\end{equation}
where $(\mu_1,\dots,\mu_n)$ are the prescribed degrees of the $n$ labeled vertices and $\Aut_D
(\gamma)$ is the automorphism of $\gamma$ preserving each vertex point-wise (see \cite{MP1998,DMSS} for details).

The \emph{generalized Catalan numbers} of type $(g,n)$ are defined in \cite{MSu} by:
$$C_{g,n}(\mu_{1},\dots ,\mu_{n})=\mu_{1}\cdots \mu_{n}D_{g,n}(\mu_{1},\dots ,\mu_{n})\,,$$ 
and they count dual graphs as before where in addition an outgoing arrow is placed on one of its incident half-edges to the vertices (this is done in order to kill the automorphisms). They are called arrowed cell graphs in \cite{OM3}.

\subsection{Counting decorated dessins.}\label{ss:decdessins}\quad 

We are interested in counting dessins where its vertices are decorated by elements in a Frobenius algebra $A$. We shall refer to them as \emph{decorated dessins}. 

Given a Frobenius algebra $A$, let 
$$\cD(\mu_{1},\dots ,\mu_{n}) \colon A^{\otimes n} \to K$$
the multilinear map which to each tuple $(v_{1},\dots ,v_{n})\in A^{\otimes n}$ associates the number $$\cD(\mu_{1},\dots ,\mu_{n};v_{1},\dots ,v_{n})$$ of \emph{decorated dessins} of type $(g,n)$ with prescribed vertices degree profile $(\mu_{1},\dots ,\mu_{n})$. Since the decorating procedure is graph independent, this number is
$$\cD(\mu_{1},\dots ,\mu_{n};v_{1},\dots ,v_{n})=D(\mu_{1},\dots ,\mu_{n})\Omega_{g,n}(v_{1},\dots ,v_{n})$$
where $(A,\eta,\Omega_{g,n})$ is the associated 2D TQFT to $A$. For the unstable case $(0,2)$, we define

\begin{equation}\label{e:DG02}
\cD_{0,2}(\mu_{1},\mu_{2};v_{1},v_{2}):=D_{0,2}(\mu_{1},\mu_{2})\Omega_{0,2}(v_{1},v_{2})
\end{equation}
where $\Omega_{0,2}(v_{1},v_{2}):=\eta(v_{1},v_{2})$ and $D_{0,2}(\mu_{1},\mu_{2})$ is given in \cite[Proposition 3.1]{DMSS}.

\begin{prop} The number of decorated dessins satisfies the following recursion equation: 

\begin{equation}\label{countdecdessins}
\begin{aligned}
&\cD_{g,n}(\mu_{1},\dots, \mu_{n};v_{1}, \dots ,v_{n})
\\
&=
\sum_{j=2} ^n \mu_j 
\cD_{g,n-1}(\mu_1+\mu_j-2,\mu_2,\dots,
\widehat{\mu_{j}},\dots,\mu_n;v_{1}\cdot v_{j},v_{2},\dots,\widehat{v_{j}},\dots,v_{n})
\\
&+
\sum_{\a+\b=\mu_1-2}
\cD_{g-1,n+1}(\a,\b,\mu_2,\dots,\mu_n;\delta(v_{1}),v_{2},\dots,v_{n}\big)
\\
&+
\sum_{\a+\b=\mu_1-2}
\sum_{\substack{
g_1+g_2=g\\I\sqcup J=\{2,\dots,n\}}}
\delta^{*}\big ( \cD_{g_1,|I|+1}(\a,\mu_I;\_,v_{I}),\cD_{g_2,|J|+1}(\b,\mu_J;\_,v_{J})\big)(v_{1})
\end{aligned}
\end{equation}
where $\delta^{*}$ is given by Definition \ref{d:coprodKernel}:
$$\begin{aligned}
&\delta^{*}\big ( \cD_{g_1,|I|+1}(\a,\mu_I;\_,v_{I}),\cD_{g_2,|J|+1}(\b,\mu_J;\_,v_{J})\big)(v_{1})=\sum_{k,\ell,a,b}\phi(v_{1},e_{k},e_{\ell})\eta^{ka}\eta^{{\ell}b}
\\
&\qquad 
\times
\left(D_{g_1,|I|+1}(\a,\mu_I)\cdot
\Omega_{g_1,|I|+1}(e_{a},v_{I})
\right)
\left(D_{g_2,|J|+1}(\b,\mu_J)\cdot
\Omega_{g_2,|J|+1}(e_{b},v_{J})
\right).
\end{aligned}$$
\end{prop}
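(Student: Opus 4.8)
The plan is to factor the decorated count $\cD_{g,n}=D_{g,n}\cdot\Omega_{g,n}$ into its purely combinatorial part $D_{g,n}$ and its algebraic decoration part $\Omega_{g,n}$, establish a recursion for each separately, and then recombine the two using the edge-contraction axioms of Definition \ref{def:ECA}. The novelty being only the decoration, the combinatorial half is essentially the known recursion of \cite{DMSS,CMS,MP2012}, and the content of the proof is that the three geometric edge-contraction moves are the topological shadows of the three ECA axioms.

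First I would recall the recursion satisfied by the undecorated dessin counts $D_{g,n}$, since it is there that the edge-contraction geometry lives. It is obtained by a bijective analysis of arrowed cell graphs: one follows the arrowed half-edge at the first vertex to the unique edge $E$ incident to it and distinguishes three mutually exclusive cases. (i) $E$ joins vertex $1$ to a distinct vertex $j$; contracting $E$ produces a dessin of type $(g,n-1)$ whose merged vertex has degree $\mu_1+\mu_j-2$, the multiplicity $\mu_j$ counting the half-edges of vertex $j$ at which the arrow can be re-attached. (ii) $E$ is a loop at vertex $1$ whose contraction leaves the surface connected, yielding a dessin of type $(g-1,n+1)$ with two new vertices of degrees $\alpha,\beta$ satisfying $\alpha+\beta=\mu_1-2$. (iii) $E$ is a separating loop, whose contraction splits the graph into components of type $(g_1,|I|+1)$ and $(g_2,|J|+1)$ with $g_1+g_2=g$ and $I\sqcup J=\{2,\dots,n\}$. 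Summing over the three cases gives the recursion for $D_{g,n}$, the ordered partition $\alpha+\beta=\mu_1-2$ and the ordered splitting $I\sqcup J$ absorbing the usual symmetry factor.

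Next I would invoke the graph-independence of the decoration. Because $\Omega_{g,n}$ depends only on the topological type $(g,n)$, I can multiply each term of the $D_{g,n}$-recursion by the value of the TQFT on the corresponding type, and it then remains to identify the resulting $\Omega$-factor with the decoration prescribed in \eqref{countdecdessins}. This is precisely what the edge-contraction axioms supply: case (i) is governed by \textbf{ECA 1}, giving $\Omega_{g,n}(v_1,\dots,v_n)=\Omega_{g,n-1}(v_1\cdot v_j,\dots,\widehat{v_j},\dots)$ and hence the product $v_1\cdot v_j$; case (ii) is governed by the connected \textbf{ECA 2}, giving $\Omega_{g,n}(v)=\Omega_{g-1,n+1}(\delta(v_1),v_2,\dots,v_n)$ and hence the coproduct $\delta(v_1)$; and case (iii) is governed by the disconnected \textbf{ECA 2}, which inserts dual basis vectors $e_a,e_b$ at the two newly created vertices and returns exactly $\sum_{k,\ell,a,b}\phi(v_1,e_k,e_\ell)\eta^{ka}\eta^{\ell b}\,\Omega_{g_1,|I|+1}(e_a,v_I)\Omega_{g_2,|J|+1}(e_b,v_J)$, i.e. the operator $\delta^*$ of Definition \ref{d:coprodKernel} applied to the two decorated factors.

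Finally I would assemble the three contributions and match the combinatorial bookkeeping term by term: the weights $\mu_j$, the sums $\sum_{\alpha+\beta=\mu_1-2}$, and the splitting data $g_1+g_2=g$, $I\sqcup J=\{2,\dots,n\}$ agree with those produced by the $D_{g,n}$-recursion, while the unstable $(0,2)$ pieces are handled consistently through \eqref{e:DG02}. The step I expect to be the main obstacle is the disconnected case (iii): one must verify that cutting the separating loop and re-labelling the two resulting boundary vertices in the increasing orders $(I_-,1,I_+)$ and $(J_-,1,J_+)$ matches exactly the insertion pattern of $\delta^*$, and that no double-counting occurs — in particular that the ordered splitting together with the $(0,2)$ convention reproduces the symmetry factor correctly and that the excluded $(0,1)$ terms never appear.
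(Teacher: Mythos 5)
Your proposal is correct and takes essentially the same approach as the paper's proof: the combinatorial recursion for the undecorated counts $D_{g,n}$ (Theorem 3.3 of \cite{DMSS}) is upgraded term by term using \textbf{ECA 1} for an edge joining vertex $1$ to vertex $j$, the connected \textbf{ECA 2} for a non-separating loop, and the disconnected \textbf{ECA 2} for a separating loop, the last of which is exactly the operator $\delta^*$. The only difference is presentational — you re-derive the $D_{g,n}$ recursion bijectively where the paper simply cites it.
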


\begin{proof}
It follows from \cite[Theorem 3.3]{DMSS} by applying the ECA axioms of \cite{OM3} (see definition \ref{def:ECA}). When we contract an edge which connects the vertex $1$ and the vertex $j>1$ we need to apply ECA 1 axiom of equation (\ref{ECA1}); if we contract an edge which forms a loop attached to vertex $1$, we need to apply ECA 2 axiom of equation (\ref{ECA2-1}) if the resulting dessin is connected, and ECA 2 axiom of equation (\ref{ECA2-2}) if the resulting dessin is the disjoint union of two dessins.
\end{proof}

Let $\cC(\mu_{1},\dots ,\mu_{n}) \colon A^{\otimes n} \to K$ be the function which for each tuple $(v_{1},\dots ,v_{n})\in A^{\otimes n}$ produces the number $\cC_{g,n}(\mu_{1},\dots, \mu_{n};v_{1}, \dots ,v_{n})$ of decorated arrowed cell graphs. We shall refer to it as \emph{twisted generalized Catalan numbers}.
\begin{cor}\label{c:twcat} The twisted generalized Catalan numbers satisfies the following recursion equation: 
\begin{equation}\label{counting formula for BG}
\begin{aligned}
&\cC_{g,n}(\mu_{1},\dots, \mu_{n};v_{1}, \dots ,v_{n})
\\
&=
\sum_{j=2} ^n \mu_j 
\cC_{g,n-1}(\mu_1+\mu_j-2,\mu_2,\dots,
\widehat{\mu_{j}},\dots,\mu_n;v_{1}\cdot v_{j},v_{2},\dots,\widehat{v_{j}},\dots,v_{n})
\\
&+
\sum_{\a+\b=\mu_1-2}
\cC_{g-1,n+1}(\a,\b,\mu_2,\dots,\mu_n;\delta(v_{1}),v_{2},\dots,v_{n}\big)
\\
&+
\sum_{\a+\b=\mu_1-2}
\sum_{\substack{
g_1+g_2=g\\I\sqcup J=\{2,\dots,n\}}}
\delta^{*}\big ( \cC_{g_1,|I|+1}(\a,\mu_I;\_,v_{I}),\cC_{g_2,|J|+1}(\b,\mu_J;\_,v_{J})\big)(v_{1})
\end{aligned}
\end{equation}
\end{cor}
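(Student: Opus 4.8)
The plan is to deduce the recursion (\ref{counting formula for BG}) for the twisted generalized Catalan numbers $\cC_{g,n}$ directly from the decorated-dessin recursion (\ref{countdecdessins}) of the Proposition, by passing from dessins to arrowed cell graphs exactly as in the untwisted setting of \cite{DMSS,MS}. The only input beyond the Proposition is the relation between the two decorated counts. Since placing an outgoing arrow on one of the incident half-edges at each of the $n$ labelled vertices multiplies the number of underlying graphs of degree profile $(\mu_1,\dots,\mu_n)$ by the factor $\mu_1\cdots\mu_n$, and since the Frobenius-algebra decoration is graph-independent (so that $\cC_{g,n}=C_{g,n}\cdot\Omega_{g,n}$ and $\cD_{g,n}=D_{g,n}\cdot\Omega_{g,n}$ with $C_{g,n}=\mu_1\cdots\mu_n D_{g,n}$), I would first record the identity $\cC_{g,n}(\mu_1,\dots,\mu_n;v_1,\dots,v_n)=\mu_1\cdots\mu_n\,\cD_{g,n}(\mu_1,\dots,\mu_n;v_1,\dots,v_n)$.

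First I would multiply (\ref{countdecdessins}) through by $\mu_1\cdots\mu_n$ and process the three groups of terms separately. In the edge-contraction term coming from ECA 1 I rewrite $\mu_1\cdots\mu_n\cdot\mu_j\,\cD_{g,n-1}(\mu_1+\mu_j-2,\dots)$ in terms of $\cC_{g,n-1}(\mu_1+\mu_j-2,\mu_2,\dots,\widehat{\mu_j},\dots,\mu_n;\,\dots)$ by absorbing the appropriate $\mu$-monomial; the Frobenius factor $v_1\cdot v_j$ supplied by (\ref{ECA1}) is untouched because it depends only on the contraction type and not on the degrees. In the connected ECA 2 term I do the same with the split $\a+\b=\mu_1-2$, reassembling $\mu_1\cdots\mu_n\,\cD_{g-1,n+1}(\a,\b,\dots)$ into $\cC_{g-1,n+1}(\a,\b,\mu_2,\dots,\mu_n;\delta(v_1),\dots)$, again leaving the comultiplication $\delta(v_1)$ from (\ref{ECA2-1}) inert. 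This is precisely the weight bookkeeping of \cite{DMSS,MS}, now carried out with an extra, passive $A^{\otimes n}\to K$ factor attached.

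The step I expect to be the only real obstacle is the disconnected ECA 2 term. Here I must check that the degree monomial $\mu_1\cdots\mu_n$ distributes correctly over the two components so as to reconstitute $C_{g_1,|I|+1}(\a,\mu_I)$ and $C_{g_2,|J|+1}(\b,\mu_J)$ out of the dessin factors $D_{g_1,|I|+1}(\a,\mu_I)$ and $D_{g_2,|J|+1}(\b,\mu_J)$ appearing in the expansion of $\delta^*$ given in the Proposition, while the structure constants $\phi(v_1,e_k,e_\ell)\eta^{ka}\eta^{\ell b}$ and the decorations $\Omega_{g_1,|I|+1}(e_a,v_I)$, $\Omega_{g_2,|J|+1}(e_b,v_J)$ remain unchanged. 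The point that makes this routine rather than delicate is that these TQFT structure constants are independent of the degrees $\mu_i$, so the $\mu$-factor matching over $I\sqcup J=\{2,\dots,n\}$ and over $\a+\b=\mu_1-2$ is identical to the purely combinatorial matching in \cite{DMSS,MS}; once it is verified on the $D$-factors, multiplying back by the (unchanged) $\delta^*$-kernel yields the $\delta^*(\cC_{g_1},\cC_{g_2})$ term of (\ref{counting formula for BG}). Assembling the three groups gives (\ref{counting formula for BG}). Alternatively, and equivalently, I could re-run the edge-contraction argument of the Proposition directly on \emph{arrowed} cell graphs, using the arrow to remove automorphisms and to produce the $\mu_j$ weights as in \cite{DMSS}, with the decoration transforming under ECA 1 and ECA 2 exactly as before; this avoids invoking the identity $\cC=\mu_1\cdots\mu_n\,\cD$ explicitly.
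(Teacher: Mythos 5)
Your main route---multiplying (\ref{countdecdessins}) through by $\mu_1\cdots\mu_n$ and ``absorbing the appropriate $\mu$-monomial'' in each term---does not work with the Proposition as stated, and the failure is not confined to the disconnected term you single out as the only real obstacle: it occurs in every term. The obstruction is structural. Equations (\ref{countdecdessins}) and (\ref{counting formula for BG}) carry literally identical weights ($\mu_j$ on the contraction sum, weight $1$ on both loop sums), and two recursions with identical weights can never be exchanged by the rescaling $\cC_{g,n}=\mu_1\cdots\mu_n\,\cD_{g,n}$, because that rescaling necessarily changes the weights. Concretely, for the ECA~1 term,
\begin{equation*}
\mu_1\cdots\mu_n\cdot\mu_j\,\cD_{g,n-1}\big(\mu_1+\mu_j-2,\mu_{[n]\setminus\{1,j\}};v_1\cdot v_j,\dots\big)
=\frac{\mu_1\mu_j^{2}}{\mu_1+\mu_j-2}\,\cC_{g,n-1}\big(\mu_1+\mu_j-2,\mu_{[n]\setminus\{1,j\}};v_1\cdot v_j,\dots\big),
\end{equation*}
and $\frac{\mu_1\mu_j^{2}}{\mu_1+\mu_j-2}\neq\mu_j$ always, since $\mu_1\mu_j-(\mu_1+\mu_j-2)=(\mu_1-1)(\mu_j-1)+1\geq 1$; similarly both loop terms acquire the factor $\mu_1/(\alpha\beta)$ rather than $1$. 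What your bookkeeping actually requires is that $\cD$ satisfy the recursion with the genuine dessin weights, namely the twisted form of \cite[Theorem 3.3]{DMSS},
\begin{equation*}
\mu_1\,\cD_{g,n}(\mu;v)=\sum_{j=2}^n(\mu_1+\mu_j-2)\,\cD_{g,n-1}(\dots)
+\sum_{\alpha+\beta=\mu_1-2}\alpha\beta\Big[\cD_{g-1,n+1}(\dots)+\sum\delta^{*}\big(\cD_{g_1,|I|+1},\cD_{g_2,|J|+1}\big)\Big],
\end{equation*}
from which your rescaling does reproduce (\ref{counting formula for BG}) exactly as you describe. Taken at face value, the Proposition already wears the Catalan weights, so your step ``absorb the appropriate $\mu$-monomial'' is false as written; and since you never display the monomial being absorbed, the proposal glosses over precisely the point where it breaks. (This also shows that (\ref{countdecdessins}) and (\ref{counting formula for BG}) cannot both hold as printed---an inconsistency of the paper itself that your argument would need to detect and repair, not inherit.)

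By contrast, the alternative you relegate to the last sentence is the correct proof and is, in substance, what the paper intends: the Corollary carries no separate argument because it is the same application of the ECA axioms, this time to the edge-contraction analysis of \emph{arrowed} cell graphs, where the arrow kills automorphisms and produces the clean weights $\mu_j$, $1$, $1$ of the Catalan recursion, while the decoration transforms by ECA~1 (\ref{ECA1}) when the arrowed edge joins vertex $1$ to vertex $j$, and by ECA~2 in its connected (\ref{ECA2-1}) or disconnected (\ref{ECA2-2}) form when it is a loop; the factorization $\cC_{g,n}=C_{g,n}\cdot\Omega_{g,n}$ then splits the count from the TQFT factor. I would promote that sentence to the main argument, or else first state and prove the dessin recursion with its correct weights and only then run your rescaling.
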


\subsection{Twisted Topological Recursion for twisted generalized Catalan numbers.}\quad

We shall apply the Laplace transform to the equation (\ref{countdecdessins}) using the method of \cite{DMSS,MS} in order to proof that the twisted differentials of Definition \ref{d:twEODiff} satisfy the twisted topological recursion of Definition \ref{d:twTR}.

Let $\mu=(\mu_{1},\dots, \mu_{n})$ and let $F^{D}_{g,n}(t_1,\dots,t_n)=\sum_{\mu\in \bZ_+ ^n}D_{g,n}(\mu)e^{-(w_1\mu_1+\cdots + w_n \mu_n)}$ be the Laplace transform of the Catalan numbers, where the relation between coordinates are:
$$z_{j}=\frac{t_{j}+1}{t_{j}-1}\,;\qquad e^{w_{j}}=\frac{t_{j}+1}{t_{j}-1}+\frac{t_{j}-1}{t_{j}+1}$$

The Laplace transform of the twisted Catalan numbers is given by
{\small$$\cF_{g,n}(t_1,\dots,t_n;v_{1},\dots ,v_{n})=\sum_{\mu\in \bZ_+ ^n}
\cD_{g,n}(\mu;v_{1},\dots,v_{n})\; e^{-(w_1\mu_1
+\cdots + w_n \mu_n)}=F_{g,n}^{D}(t_1,\dots,t_n)\Omega_{g,n}(v_{1},\dots,v_{n})$$}

Let $W_{g,n}^{D}(t_{1},\dots ,t_{n})=dt_{1}\cdots dt_{n}F^{D}_{g,n}(t_1,\dots,t_n)$ be the differential forms of \cite{DMSS} and let $x_{j}$ be variables defined by $x_{j}=e^{w_{j}}$ and write
$$W_{g,n}^{D}(t_{1},\dots ,t_{n})=w_{g,n}^{D}(t_{1},\dots ,t_{n})dt_{1}\cdots dt_{n}=w_{g,n}(x_{1},\dots ,x_{n})dx_{1}\cdots dx_{n}$$

We have
$$\cw_{g,n}(x_{1},\dots ,x_{n};v_{1},\dots ,v_{n})=w_{g,n}(x_{1},\dots ,x_{n})\Omega_{g,n}(v_{1},\dots ,v_{n})$$
and the twisted meromorphic differentials
\begin{align*}
\cW_{g,n}(t_{1},\dots ,t_{n};v_{1},\dots ,v_{n})&=dt_{1}\cdots dt_{n}\cF_{g,n}(t_1,\dots,t_n;v_{1},\dots ,v_{n})=\\
&=W_{g,n}^{D}(t_{1},\dots ,t_{n})\Omega_{g,n}(v_{1},\dots ,v_{n})\,.
\end{align*}

\begin{prop}\label{p:wG}
The Laplace transform of the recursion formula (\ref{countdecdessins}) is the following ECA based
differential recursion:
\begin{multline}
\label{eq:LT of Dgn}
-x_1\;\cw_{g,n}(x_1,\dots,x_n;v_{1},\dots ,v_{n})=
\\
=
\sum_{j=2}^n
 \frac{\partial}{\partial x_j}
     \left(
    \frac{1}{x_j-x_1}
    \left(\cw_{g,n-1}(x_2,\dots,x_n;v_{1}\cdot v_{j},v_{[n]\setminus \{1,j\}})
    -
    \cw_{g,n-1}(x_{[n]\setminus \{j\}};v_{1}\cdot v_{j},v_{[n]\setminus \{1,j\}})
    \right)
    \right)
    \\
  +
 \cw_{g-1,n+1}(x_1,x_1,x_{[n]\setminus \{1\}};\delta(v_{1}),v_{[n]\setminus \{1\}})
 + \\
+ \sum_{\substack{g_1+g_2=g\\
 I\sqcup J=\{2,\dots,n\}}}
\delta^{*}\big( \cw_{g_1,|I|+1}(x_1,x_I;-,v_{I}),
  \cw_{g_2,|J|+1}(x_1,x_J;-,v_{J})\big)(v_{1}).
\end{multline}
\end{prop}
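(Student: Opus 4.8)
The plan is to exploit the factorization $\cD_{g,n}(\mu;v_1,\dots,v_n)=D_{g,n}(\mu)\,\Omega_{g,n}(v_1,\dots,v_n)$ established above, together with the observation that the Laplace transform $\sum_{\mu}(\,\cdot\,)e^{-(w_1\mu_1+\cdots+w_n\mu_n)}$ acts only on the discrete degree variables $\mu=(\mu_1,\dots,\mu_n)$, whereas the Frobenius-algebra data $\Omega_{g,n}(v_1,\dots,v_n)$ carries no $\mu$-dependence. Consequently the TQFT factors behave as constants under the transform and may be pulled outside term by term. The whole computation therefore reduces to the \emph{untwisted} Laplace transform of the dessin recursion of \cite[Theorem 3.3]{DMSS}, with each resulting differential expression multiplied by the $\Omega$-factor dictated by the edge-contraction axiom responsible for that term.

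First I would treat the right-hand side of (\ref{countdecdessins}) summand by summand. For the ECA~1 contribution $\sum_{j\ge 2}\mu_j\,D_{g,n-1}(\mu_1+\mu_j-2,\dots)\,\Omega_{g,n-1}(v_1\cdot v_j,\dots)$, the $\Omega$-factor is independent of the summation index, so applying the Laplace transform to the purely combinatorial part reproduces, by the computation of \cite{DMSS,MS}, the difference-quotient operator $\frac{\partial}{\partial x_j}\big(\frac{1}{x_j-x_1}(\,\cdots\,)\big)$; multiplying back by $\Omega_{g,n-1}(v_1\cdot v_j,\dots)$ yields the first line of (\ref{eq:LT of Dgn}). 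The ECA~2-1 term $\sum_{\a+\b=\mu_1-2}D_{g-1,n+1}(\a,\b,\dots)\,\Omega_{g-1,n+1}(\delta(v_1),\dots)$ transforms, via the same standard identity, into the diagonal specialization $\cw_{g-1,n+1}(x_1,x_1,x_{[n]\setminus\{1\}};\delta(v_1),\dots)$, again with the coproduct-decorated $\Omega$ riding along unchanged, while the left-hand side $\cD_{g,n}(\mu;v)=D_{g,n}(\mu)\Omega_{g,n}(v)$ transforms to $-x_1\,\cw_{g,n}(\,\cdot\,;v)$ exactly as in the untwisted case.

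The main obstacle lies in the ECA~2-2 summand, where $\delta^{*}$ couples the combinatorial convolution over $\a+\b=\mu_1-2$ to the Frobenius-algebra contraction $\sum_{k,\ell,a,b}\phi(v_1,e_k,e_\ell)\eta^{ka}\eta^{\ell b}$ against the factors $\Omega_{g_1,|I|+1}(e_a,v_I)$ and $\Omega_{g_2,|J|+1}(e_b,v_J)$. Here I would argue that, since $\delta^{*}$ is linear and wholly independent of the degree variables, it commutes with the Laplace transform: the transform acts only on the numerical factors $D_{g_1,|I|+1}(\a,\mu_I)$ and $D_{g_2,|J|+1}(\b,\mu_J)$ through their first arguments, converting the one-dimensional convolution into a product of two differentials each evaluated at $x_1$ (the standard convolution-to-product step of \cite{DMSS}), while the contraction against the basis vectors $e_a,e_b$ filling the placeholder slots is untouched. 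Reassembling yields precisely $\delta^{*}\big(\cw_{g_1,|I|+1}(x_1,x_I;-,v_I),\cw_{g_2,|J|+1}(x_1,x_J;-,v_J)\big)(v_1)$. The only remaining point requiring care is the bookkeeping of the unstable $(0,2)$ contribution: one must check that splitting off the $(0,2)$ factor via (\ref{e:DG02}) and (\ref{e:prodKcoprodK}) feeds correctly into the ECA~1 difference-quotient term rather than producing a spurious piece, which follows from the defining relation $m=(1\times\eta)\circ(\delta\times 1)$ together with the normalization $\Omega_{0,2}(v_1,v_2)=\eta(v_1,v_2)$.
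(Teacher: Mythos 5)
Your proposal is correct and matches the paper's (implicit) argument: the paper states Proposition \ref{p:wG} without a written proof, relying exactly on the factorization $\cw_{g,n}(x_1,\dots,x_n;v_{1},\dots,v_{n})=w_{g,n}(x_1,\dots,x_n)\,\Omega_{g,n}(v_{1},\dots,v_{n})$ set up just before it, so that the Laplace transform acts only on the combinatorial factors of (\ref{countdecdessins}) and reproduces the DMSS differential recursion term by term, with each term carrying the $\Omega$-factor in the ECA-rewritten form. Your key observations --- that the TQFT factors are $\mu$-independent constants under the transform, and that $\delta^{*}$ is linear and degree-independent hence commutes with the Laplace transform in the splitting term --- are precisely the points the paper takes for granted.
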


Following the computations for the unstable case $(0,2)$ of \cite{DMSS}, we define
\begin{multline}
\label{eq:W02D}
\cW_{0,2} (t_1,t_2;v_{1},v_{2})=W_{0,2}^D (t_1,t_2)\Omega_{0,2}(v_{1},v_{2})
=d_1d_2 F_{0,2} ^D(t_1,t_2)\Omega_{0,2}(v_{1},v_{2})=\\
=\big(\frac{dt_1\cdot dt_2}{(t_1-t_2)^2}
-\frac{dx_1\cdot dx_2}{(x_1-x_2)^2}\big)\Omega_{0,2}(v_{1},v_{2})
=\Omega_{0,2}(v_{1},v_{2})\frac{dt_1\cdot dt_2}{(t_1+t_2)^2}.
\end{multline}

\begin{thm}
\label{thm:main2}
The twisted differential forms
\begin{equation}
\label{eq:WgnG and WgnD2}
\cW_{g,n}(t_1,\dots,t_n;v_{1},\dots ,v_{n})
=d_1\cdots d_n \cF_{g,n}(t_1,\dots,t_n;v_{1},\dots ,v_{n})
\end{equation}
satisfy the Eynard-Orantin twisted topological recursion 
\begin{multline}
\label{eq:GEO}
\cW_{g,n}(t_1,t_2,\dots,t_n;v_{1},\dots ,v_{n})
= \frac{1}{2\pi i} 
\int_{\phi} K^{D}(t,t_1)
\Bigg[
\cW_{g-1,n+1}(t,-t,t_2,\dots,t_n;\delta(v_{1}),v_{[n]\setminus \{1\}})\\
+
\sum^{\text{No $(0,1)$ terms}} _
{\substack{g_1+g_2=g\\I\sqcup J=\{2,3,\dots,n\}}}
\delta^{*}\big(\cW_{g_1,|I|+1}(t,t_I;-,v_{I}),\cW_{g_2,|J|+1}(-t,t_J;-,v_{J})\big)(v_{1})
\Bigg].
\end{multline}
with respect to the
spectral curve
\begin{equation}
\label{eq:Gspectral}
\begin{cases}
x = z+\frac{1}{z}\\
y = -z 
\end{cases}
\end{equation}
and the recursion kernel
\begin{multline}
\label{eq:Gkernel}
K^{D}(t,t_1) =
\half\; \frac{\int_t ^{-t} W^{D}_{0,2}(\cdot,t_1)}
{W^{D}_{0,1}(-t)-W^{D}_{0,1}(t)}=
\frac{1}{2} 
\left(
\frac{1}{t+t_1}+\frac{1}{t-t_1}
\right)
\frac{1}{32}\cdot
\frac{(t^2-1)^3}{t^2}\cdot \frac{1}{dt}\cdot dt_1.
\end{multline}

The integration is taken with respect to
a contour $\phi$ in the complex $t$-plane
consisting of two concentric circles centered around
$0$, with a positively oriented 
small inner circle of radius $\epsilon$
and a large negatively oriented circle of radius
$1/\epsilon$. This annulus should enclose all values
of $\pm t_i$, $i=1,\dots,n$.
\end{thm}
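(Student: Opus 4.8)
The plan is to reduce the statement to the untwisted Eynard--Orantin recursion for the generalized Catalan numbers established in \cite{DMSS}, exploiting the factorization of the twisted differentials into a spectral-curve factor and a Frobenius-algebra factor. From the computations preceding the statement we have
$$\cW_{g,n}(t_1,\dots,t_n;v_1,\dots,v_n)=W^{D}_{g,n}(t_1,\dots,t_n)\,\Omega_{g,n}(v_1,\dots,v_n),$$
where the $W^{D}_{g,n}$ are exactly the Eynard--Orantin differentials of \cite{DMSS}. Since $\Omega_{g,n}$ does not depend on the variables $t_1,\dots,t_n$, the operators $d_1\cdots d_n$ and the contour integral against $K^{D}(t,t_1)$ act only on the factor $W^{D}_{g,n}$, whereas the twisted kernel $K_\delta=K^{D}\times\delta^{*}$ applies $\delta^{*}$ only to the Frobenius factor. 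These two actions are completely decoupled, so the theorem will follow by multiplying the untwisted integral recursion on the spectral-curve factor by the ECA-governed transformation on the Frobenius factor.

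First I would invoke Proposition \ref{p:wG}, which supplies the ECA-based differential recursion (\ref{eq:LT of Dgn}) satisfied by $\cw_{g,n}$. Stripping off the Frobenius decoration, its spectral-curve part is precisely the differential recursion for $w^{D}_{g,n}$ proved in \cite{DMSS}. Next I would carry out, on this spectral-curve factor alone, the residue (contour-integral) manipulation of \cite{DMSS}: one forms the kernel $K^{D}(t,t_1)$ from the unstable differentials $W^{D}_{0,1}$ and $W^{D}_{0,2}$ of the curve (\ref{eq:Gspectral}) as in (\ref{eq:Gkernel}), and checks that the $\partial/\partial x_j$ terms of (\ref{eq:LT of Dgn}) are exactly the residues captured by the contour $\phi$ encircling the ramification points. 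This step is identical to the untwisted case and introduces no new idea; it is the discrete Laplace transform acting as the mirror map of \cite{EMS,MZ,CMS,MS,MP2012}.

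The genuinely twisted bookkeeping appears only when the Frobenius factor is reattached. In the genus-reducing term the decoration is $\Omega_{g-1,n+1}(\delta(v_1),v_{[n]\setminus\{1\}})$, which by the connected {\bf ECA 2} axiom (\ref{ECA2-1}) equals $\Omega_{g,n}(v_1,\dots,v_n)$; after the residue computation it yields the term $\cW_{g-1,n+1}(t,-t,t_2,\dots,t_n;\delta(v_1),v_{[n]\setminus\{1\}})$ of (\ref{eq:GEO}). In the splitting term the decoration carries the operator $\delta^{*}$ applied to $\Omega_{g_1,|I|+1}\otimes\Omega_{g_2,|J|+1}$, which is the disconnected {\bf ECA 2} axiom (\ref{ECA2-2}); this reproduces the factor $\delta^{*}\big(\cW_{g_1,|I|+1}(t,t_I;-,v_I),\cW_{g_2,|J|+1}(-t,t_J;-,v_J)\big)(v_1)$. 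The unstable $(0,2)$ contribution is absorbed into the kernel through the definition (\ref{eq:W02D}), so that no $(0,1)$ term survives and the bracket in (\ref{eq:GEO}) closes.

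The main obstacle is the residue computation of the second paragraph: verifying that the contour integral against $K^{D}$ reproduces the two derivative terms of (\ref{eq:LT of Dgn}) and localizes them at the ramification points, together with the explicit evaluation of (\ref{eq:Gkernel}) from the spectral curve (\ref{eq:Gspectral}). However, because the Frobenius factor $\Omega_{g,n}$ is inert under all $t$-variable operations and transforms solely through the ECA axioms---which by the equivalence proved in \cite{OM3} are already encoded in $\delta^{*}$ and hence in the twisted kernel $K_\delta$---this obstacle is exactly the one resolved in \cite{DMSS}, and the twist adds only the two lines of ECA bookkeeping displayed above.
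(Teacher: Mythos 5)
Your proposal is correct and is essentially the paper's own approach: the appendix proof likewise starts from Proposition \ref{p:wG}, redoes the residue analysis of \cite{DMSS} in the $t$-variables, and uses ECA 1 and ECA 2 to collapse every Frobenius factor to $\Omega_{g,n}(v_{1},\dots,v_{n})$, which is exactly your decoupling observation. The only differences are organizational (the paper re-derives the \cite{DMSS} computation inline rather than citing it as a black box, precisely to make the unstable bookkeeping explicit) plus one detail you state backwards: it is the decorated $(0,1)$ terms that get absorbed into the kernel denominator $W^{D}_{0,1}(-t)-W^{D}_{0,1}(t)$, while the decorated $(0,2)$ terms remain in the ``no $(0,1)$'' sum of (\ref{eq:GEO}), where their double poles at $\pm t_{j}$ reproduce, via ECA 1, the derivative terms of equation (\ref{eq:LT of Dgn}).
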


\begin{proof}
Given in the appendix.
\end{proof}

\begin{rem}
Equation (\ref{eq:GEO}) is the twisted topological recursion of Definition \ref{d:twTR} w.r.t. the spectral curve $\{x=z+\frac{1}{z},y=-z\}$ of \cite{DMSS} and the twisted kernel $K_{\delta}=K\times \delta^*$, where the explicit Eynard-Orintin kernel of equation (\ref{eq:Gkernel}) has been chosen. This shows that the twisted Eynard-Orantin differentials satisfies a twisted Eynard-Orantin Topological Recursion which splits as the product of the EO TR of \cite{DMSS} and a 2D TQFT $(A,\eta,\Omega_{g,n})$.
\end{rem}

\section{Gromov-Witten theory of $BG$ and orbifold DVV equation.}
\label{s:GWBG}

\subsection{Background.}
Following \cite{ACV, AV, CR, JK}, the moduli stack $\Mbar_{g,n}(BG)$ of stable maps from 
$n$-pointed twisted curves of genus $g$ to $BG$ is a smooth, proper, Deligne-Mumford stack of dimension $3g-3+n$. The forgetful morphism 
\begin{equation}
\label{eq:forget}
\varphi : \Mbar_{g,n}(BG) \lrar\Mbar_{g,n}
\end{equation}
is generically finite. In particular, its restriction to the smooth locus,
\begin{equation}
\label{eq:forgetsmooth}
\varphi : \cM_{g,n}(BG) \lrar\cM_{g,n},
\end{equation}
is a finite morphism with the  fiber 
\begin{equation}
\label{eq:homCopen}
\Hom\big(\pi_1(C\setminus\{p_1,\dots,p_n\}),G\big)
\big/\!\!\big/G
\end{equation}
at each $[C,\{p_1,\dots,p_n\}]\in \cM_{g,n}$, where the $G$ action on the space of homomorphisms
is via conjugation action.

Let $I BG$ be the inertia stack, which decomposes as $$I BG = \coprod_{[\![r]\!]} BG_{[\![r]\!]}=\coprod_{[\![r]\!]}[pt/C(r)]\,,$$
where $C(r)$ denotes the centralizer of $r\in G$ and the evaluation morphisms $$ev_i: \mgnbar(BG) \to I BG$$ allows to see that the stack $\M_{g,n}(BG)$ breaks up as the disjoint union of open and closed
substacks 
$$\M_{g,n}(BG) =
\coprod_{([\![r_1]\!],\dots,[\![r_n]\!])}
\M_{g,n}(BG,[\![r_1]\!],\dots,[\![r_n]\!]),$$ where
$\M_{g,n}(BG,[\![r_1]\!],\dots,[\![r_n]\!])=
ev_1^{-1}(BG_{[\![r_1 ]\!]}) \cap \dots \cap
ev_n^{-1}(BG_{[\![r_n ]\!]})$ and $[\![r_i]\!]$ denotes the conjugacy class of the element $r_i$ in $G$. The map $$\mgn(BG , [\![r_1]\!], \dots, [\![r_n]\!]) \to \cM_{g, n}$$ is a finite morphism of degree 
\begin{equation}\label{e:OmegaNumber}
\Omega^G_{g,n}(\bold r) =\frac{|\cX^G_{g} (\bold r)|}{|G|}\,,
\end{equation}
where $\bold r=([\![r_1]\!],\dots,[\![r_n]\!])$ and  $$\cX^G_{g} (\bold r):=\{(\alpha_1, \dots, \alpha_g, \beta_1, \dots,
\beta_g, \sigma_1, \dots, \sigma_n) |\textstyle
\prod^g_{i=1}[\alpha_i,\beta_i] =\prod^n_{j=1}\sigma_j, \ \sigma_j
\in [\![r_j]\!]\text{ for all } j \}.$$

Let
$$A :=H^*_{orb}(BG,\bC) := H^*({IBG,\bC}) =\bigoplus_{[\![r]\!]} \bC$$
be the orbifold cohomology of $BG$ as a vector space and, for each conjugacy class $[\![r]\!]$ in $G$, let $e_{[\![r]\!]}$ denote a $\bC$-basis of $A$. It is know that $A$ is a Frobenius algebra isomorphic to the center of the group algebra of $G$ where the non-degenerated bilinear form $\eta \colon A\otimes A \to \bC$ is given by

\begin{equation}\label{eq:metric}
\eta_{ij}:=\eta (e_{[\![r_i]\!]},e_{[\![r_j]\!]})=
\frac{1}{|C(r_i)|}\delta_{[\![r_i]\!][\![r^{-1}_j]\!]},
\end{equation}
and the multiplication (orbifold product) $m\colon A \otimes A \to A$ is given by:

\begin{equation}\label{e:product}
m(e_{[\![r_{i}]\!]}, e_{[\![r_{j}]\!]})=e_{[\![r_{i}]\!]}e_{[\![r_{j}]\!]} =\sum_{\substack{\sigma_{i},\sigma_{j}\\ \sigma_{i}\in \cl{a_{i}}\\ \sigma_{j}\in \cl{a_{j}}}}\frac{|C(\sigma_i \sigma_j)|}{|G|}
e_{[\![\sigma_i \sigma_j]\!]}\,.
\end{equation}

In \cite{JK} it is proven that the collection
\begin{align*}
\Omega_{g,n}^{G}\colon A^{\otimes n}& \to H^{*}(\M_{g,n},\bC)\\
e_{\cl{r_{1}}}\otimes \cdots \otimes e_{\cl{r_{n}}}&\mapsto \Omega_{g,n}^{G}(e_{\cl{r_{1}}}\otimes \cdots \otimes e_{\cl{r_{n}}})=\varphi_{*}\big(ev_{1}^{*}(e_{\cl{r_{1}}})\dots ev_{n}^{*}(e_{\cl{r_{n}}})\big)=\Omega_{g,n}^{G}(\bold r)
\end{align*}
is a Cohomological Field Theory. In fact it is a 2D TQFT since it takes values in $H^{0}(\M_{g,n},\bC)=\bC$. 

\subsection{Orbifold generalized Catalan numbers and twisted topological recursion}\quad

The $3$-point function $\phi \colon A \otimes A \otimes A \to \bC$ is defined by: 
\begin{equation}\label{e:3-pt}
\phi(e_{[\![r_{1}]\!]},e_{[\![r_{2}]\!]},e_{[\![r_{3}]\!]}):= \eta (e_{[\![r_{1}]\!]}e_{[\![r_{2}]\!]},e_{[\![r_{3}]\!]})= \eta (e_{[\![r_{1}]\!]},e_{[\![r_{2}]\!]}e_{[\![r_{3}]\!]})=\epsilon(e_{[\![r_{1}]\!]}e_{[\![r_{2}]\!]}e_{[\![r_{3}]\!]})\,,
\end{equation}
where $\epsilon \colon A \to \bC$ is the counit, and \cite[Proposition 3.1]{JK} shows that
\begin{equation}\label{e:3point}
\phi(e_{[\![r_{1}]\!]},e_{[\![r_{2}]\!]},e_{[\![r_{3}]\!]})=\Omega_{0,3}^{G}(e_{[\![r_{1}]\!]},e_{[\![r_{2}]\!]},e_{[\![r_{3}]\!]})\,.
\end{equation}
Using CohFT 3 axiom of \cite[Definition 3.1]{OM3} and the expression for $v\in A$
$$v=\sum_{\cl{a},\cl{b}}\eta(v,e_{\cl{a}})\eta^{\cl{a}\cl{b}}e_{\cl{b}}$$
we can see that the genus $0$ values of the collection $\{\Omega_{g,n}^{G}\}$ are given by
$$\Omega^{G}_{0,n}(v_{\cl{r_{1}}}, \cdots ,v_{\cl{r_{n}}})=\epsilon(v_{\cl{r_{1}}}\cdots v_{\cl{r_{n}}})$$

Thus, it follows from \cite[Theorem 3.8]{OM3} that
$$\Omega^{G}_{g,n}(v_{\cl{r_{1}}}, \cdots ,v_{\cl{r_{n}}})=\epsilon(v_{\cl{r_{1}}} \cdots v_{\cl{r_{n}}}\bold e^{g})$$
where $\bold e^{g}$ denotes the $g$-th power of the Euler element.

Let $\Gamma_{g,n}$ be the set of connected cell graphs of type $(g,n)$ with labeled vertices and for each cell graph $\gamma \in \Gamma_{g,n}$ let 
\begin{align*}
\Omega(\gamma)\colon A^{\otimes n}&\to \bC\\
v_{\cl{r_{1}}}\otimes \dots \otimes v_{\cl{r_{n}}}&\mapsto \Omega(\gamma)(v_{\cl{r_{1}}}, \dots ,v_{\cl{r_{n}}})
\end{align*}
be an $n$-variable function which assigns $v_{\cl{r_{i}}}\in A$ to the $i$-th vertex of $\gamma$. 
\begin{rem}
This decorating function consists of keeping truck of the orbifold information at each marked orbifold point of the twisted curve which maps to $BG$. 
\end{rem}

Provided that we define 
\begin{equation}\label{e:Omega01}
\Omega^{G}_{0,1}(v):=\epsilon(v)
\end{equation}
\begin{equation}\label{e:Omega02}
\Omega^{G}_{0,2}(v_{\cl{r_{1}}},v_{\cl{r_{2}}}):=\eta(v_{\cl{r_{1}}},v_{\cl{r_{2}}})
\end{equation}
and using  \cite[Theorem 4.7, Corollary 4.8]{OM3} we have:
\begin{prop}
For each cell graph $\gamma \in \Gamma_{g,n}$ define $$
\Omega^{G}_{g,n}(v_{\cl{r_{1}}}, \dots ,v_{\cl{r_{n}}})=\Omega(\gamma)(v_{\cl{r_{1}}}, \dots ,v_{\cl{r_{n}}})\,.$$
Then the collection $\{\Omega^{G}_{g,n}\}$ satisfies the edge-contraction axioms of Definition \ref{def:ECA}. 

As a consequence, $\Omega^{G}_{g,n}(v_{\cl{r_{1}}}, \dots ,v_{\cl{r_{n}}})$ is symmetric with respect to permutation indices and $\{\Omega^{G}_{g,n}\}$ is the 2D TQFT associated with the Frobenius algebra $A=H^{*}_{orb}(BG)$.

\end{prop}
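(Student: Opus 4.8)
The plan is to deduce the statement from the equivalence between the standard $2$D TQFT axioms and the edge-contraction axioms established in \cite[Theorem 3.8, Corollary 4.8]{OM3}, applied to the Frobenius algebra $A=H^{*}_{orb}(BG)$. The guiding observation is that \emph{both} descriptions of $\Omega^{G}_{g,n}$ in play here---the geometric one coming from \cite{JK}, and the combinatorial decoration $\Omega(\gamma)$ of a cell graph---collapse to one and the same closed expression, namely $\epsilon(v_{\cl{r_{1}}}\cdots v_{\cl{r_{n}}}\bold e^{g})$. Once this identification is in place, the definition $\Omega^{G}_{g,n}:=\Omega(\gamma)$ is unambiguous (graph-independent) and the edge-contraction axioms hold for it by construction, since \cite[Corollary 4.8]{OM3} guarantees that the decoration attached to any finite-dimensional commutative Frobenius algebra satisfies ECA 0, ECA 1 and ECA 2.

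First I would check that the unstable seeds are the correct ones: the normalization $\Omega^{G}_{0,1}(v)=\epsilon(v)$ of (\ref{e:Omega01}) is exactly ECA 0 of (\ref{ECA0}), while $\Omega^{G}_{0,2}(v_{\cl{r_{1}}},v_{\cl{r_{2}}})=\eta(v_{\cl{r_{1}}},v_{\cl{r_{2}}})$ of (\ref{e:Omega02}) is the pairing (\ref{eq:metric}) that seeds the recursion. Next I would pin down the geometric side. By \cite[Proposition 3.1]{JK}, recorded in (\ref{e:3point}), the genus-zero three-point values coincide with the three-point function $\phi$ of (\ref{e:3-pt}); feeding this into the factorization (gluing) axiom of the cohomological field theory of \cite{JK} together with the expansion $v=\sum_{\cl{a},\cl{b}}\eta(v,e_{\cl{a}})\eta^{\cl{a}\cl{b}}e_{\cl{b}}$ of an arbitrary element forces $\Omega^{G}_{0,n}(v_{\cl{r_{1}}},\dots,v_{\cl{r_{n}}})=\epsilon(v_{\cl{r_{1}}}\cdots v_{\cl{r_{n}}})$, as already displayed above. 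Finally, each non-separating gluing that raises the genus contributes precisely one handle operator $\bold e=(m\circ\delta)(1)$, so \cite[Theorem 3.8]{OM3} yields the genus-$g$ formula $\Omega^{G}_{g,n}(v_{\cl{r_{1}}},\dots,v_{\cl{r_{n}}})=\epsilon(v_{\cl{r_{1}}}\cdots v_{\cl{r_{n}}}\bold e^{g})$, which is exactly the combinatorial value of $\Omega(\gamma)$.

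With the two collections identified, \cite[Corollary 4.8]{OM3} shows that $\{\Omega^{G}_{g,n}\}$ satisfies all three edge-contraction axioms of Definition \ref{def:ECA}; the symmetry under permutation of the labeled insertions and the identification of $\{\Omega^{G}_{g,n}\}$ with the $2$D TQFT attached to $A=H^{*}_{orb}(BG)$ then follow from the same equivalence. I expect the one genuinely nontrivial ingredient to be the geometric input that must be borrowed from \cite{JK}: that the pushforwards $\varphi_{*}\big(ev_{1}^{*}(e_{\cl{r_{1}}})\cdots ev_{n}^{*}(e_{\cl{r_{n}}})\big)$ are compatible with the boundary gluing maps of $\mgnbar$, i.e.\ that $\{\Omega^{G}_{g,n}\}$ is a bona fide cohomological field theory. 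Granting this, the argument is purely algebraic bookkeeping of Euler-element insertions, which is precisely what \cite[Theorem 3.8]{OM3} packages, while the combinatorial graph-independence of $\Omega(\gamma)$ is supplied by \cite[Theorem 4.7]{OM3}.
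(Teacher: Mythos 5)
Your proposal is correct and follows essentially the same route as the paper: the paper likewise derives the genus-zero values $\Omega^{G}_{0,n}=\epsilon(v_{\cl{r_{1}}}\cdots v_{\cl{r_{n}}})$ from \cite[Proposition 3.1]{JK} and the gluing axiom, invokes \cite[Theorem 3.8]{OM3} to get $\epsilon(v_{\cl{r_{1}}}\cdots v_{\cl{r_{n}}}\bold e^{g})$, and then cites \cite[Theorem 4.7, Corollary 4.8]{OM3} (together with the unstable seeds (\ref{e:Omega01}), (\ref{e:Omega02})) to conclude that the graph decoration is graph-independent, satisfies the ECA, and coincides with the 2D TQFT of $A=H^{*}_{orb}(BG)$. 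Your identification of the geometric input from \cite{JK} (compatibility of the pushforwards with boundary gluing) as the one genuinely nontrivial ingredient matches the paper's implicit reliance on that fact.
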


\begin{rem}
Let us recall that in terms of the three-point function defined in equation (\ref{e:3point}), the multiplication and the comultiplication in $A$ can be written as
\begin{equation}\label{e:prod}
v_{\cl{r_{1}}}v_{\cl{r_{2}}}=\sum_{\cl{a},\cl{b}}\Omega_{0,3}^{G}(v_{\cl{r_{1}}},v_{\cl{r_{2}}},e_{\cl{a}})\eta^{ab}e_{\cl{b}}
\end{equation}
\begin{equation}\label{e:coprod}
\delta (v)=\sum_{\cl{r_i},\cl{r_j},\cl{a},\cl{b}}\Omega_{0,3}^{G}(v,e_{\cl{r_i}},e_{\cl{r_j}})\eta^{ia}\eta^{jb}e_{\cl{a}}\otimes e_{\cl{b}}
\end{equation}

So that the product and coproduct of the Frobenius algebra can be thought of as the two kinds of orbifold pair of pants of Figure \ref{fig:prodPants}.
\begin{figure}[h!]
\epsfig{file=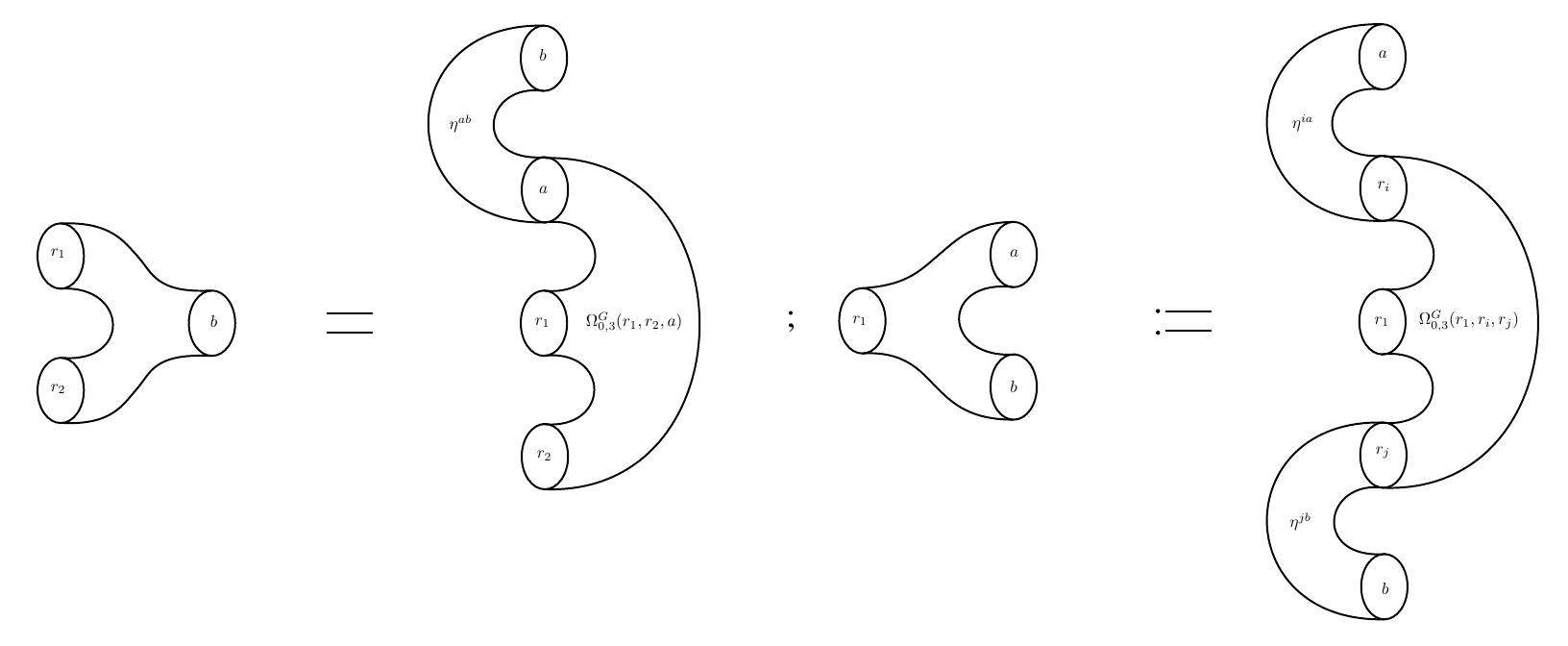, width=6.2in}
\caption{Orbifold pair of pants as product and coproduct in $A$, written in terms of the three-point function.}
\label{fig:prodPants}
\end{figure}

In this fashion, the ECA implies that $\Omega^{G}_{g,n}(v_{\cl{r_{1}}}, \dots ,v_{\cl{r_{n}}})$ satisfy the following relations
$$\Omega_{g,n}^{G}(v_{\cl{r_{1}}}, \dots ,v_{\cl{r_{n}}})=\sum_{\cl{a},\cl{b}}\Omega_{0,3}^{G}(v_{\cl{r_{1}}},v_{\cl{r_{j}}},e_{\cl{a}})\eta^{ab}\Omega_{g,n-1}^{G}(e_{\cl{b}},\bold v_{[n]\setminus\{1,j\}})$$
$$\Omega_{g,n}^{G}(v_{\cl{r_{1}}}, \dots ,v_{\cl{r_{n}}})=\sum_{\cl{r_i},\cl{r_j},\cl{a},\cl{b}}\Omega_{0,3}^{G}(v_{\cl{r_1}},e_{\cl{r_i}},e_{\cl{r_j}})\eta^{ia}\eta^{jb}\Omega_{g-1,n+1}^{G}(e_{\cl{a}},e_{\cl{b}},\bold v_{[n]\setminus\{1\}})$$
$$\Omega_{g,n}^{G}(v_{\cl{r_{1}}},\dots ,v_{\cl{r_{n}}})=\sum_{\cl{r_i},\cl{r_j},\cl{a},\cl{b}}\Omega_{0,3}^{G}(v_{\cl{r_1}},e_{\cl{r_i}},e_{\cl{r_j}})\eta^{ia}\eta^{jb}\Omega_{g_{1},|I|+1}^{G}(e_{\cl{a}},\bold v_{I})\Omega_{g_{2},|J|+1}^{G}(e_{\cl{a}},\bold v_{J})$$ where $g=g_1+g_2$ and $I \coprod J=\{2,\dots,n\}$. 

These relations are reflected by Figure \ref{fig:2} as a cutting off a pair of pants from an $n$-puntured orbifold surface. 
\begin{figure}[h!]
\epsfig{file=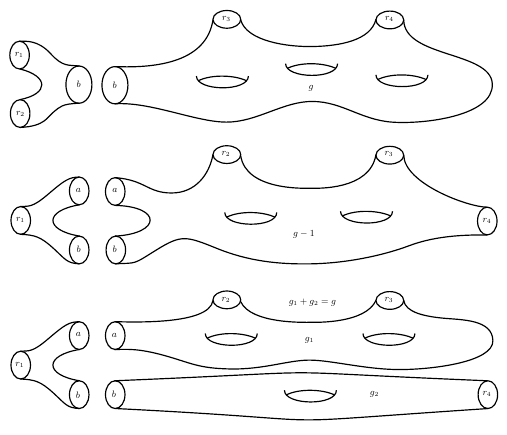, width=4.0in}
\caption{Cutting off a pair of pants from an $n$-puntured orbifold surface. Orbifold generalization of \cite[Figure 2.1]{MS}}
\label{fig:2}
\end{figure}
\qed
\end{rem}

For the $(g,n)=(1,1)$ case, let us define the $(1,1)$-operator $e\colon A \to \bC$ as the following composite:
\begin{equation}\label{e:11op}
e\colon A \overset{h}{\to} A \overset{\epsilon}{\to} \bC \,,
\end{equation}
where $h\colon A \to A$ is the handle operator of equation (\ref{e:handleop}).  The $(1,1)$-operator has one input ``decorated marking'' and no output markings and we have:
\begin{equation}\label{e:Omega11}
\Omega_{1,1}^{G}(v_{\cl{r_{1}}})=\sum_{\cl{r_i},\cl{r_j},\cl{a},\cl{b}}\Omega_{0,3}^{G}(v_{\cl{r_1}},e_{\cl{r_i}},e_{\cl{r_j}})\eta^{ia}\eta^{jb}\Omega_{0,2}^{G}(e_{\cl{a}},e_{\cl{b}})=\Omega_{0,2}^G\big (\delta(v_{\cl{r_{1}}})\big)\,,
\end{equation}

As in section \ref{ss:decdessins}, let us denote by $\cC_{g,n}(\mu_{1},\dots, \mu_{n};v_{\cl{r_{1}}}, \dots ,v_{\cl{r_{n}}})$ the number of decorated arrowed cell graphs of labeled vertices of degrees $(\mu_{1},\dots, \mu_{n})$, we shall call them in this particular case \emph{orbifold generalized Catalan numbers}. They satisfy the equation of Corollary \ref{c:twcat}. 

Applying the Laplace transform method to this equation we have by Theorem \ref{thm:main2} that the twisted meromorphic differentials 
$$\cW_{g,n}(z_1,\dots, z_n;v_{\cl{r_{1}}}, \dots ,v_{\cl{r_{n}}})=W^D_{g,n}(z_1,\dots ,z_n)\Omega^G_{g,n}(v_{\cl{r_{1}}}, \dots ,v_{\cl{r_{n}}})$$
are a solution of a twisted topological recursion (Definition \ref{d:twTR}) which splits as the product of the usual topological recursion given in \cite{DMSS} and the 2D TQFT $(A,\eta,\{\Omega^G_{g,n}\})$ given by the orbifold cohomology of $BG$ as the Frobenius algebra $A$.

\begin{rem}
The orbifold generalized Catalan numbers counts a special type of orbifold arrowed dessins. Notice that if we denote by $D$ the associated $G$-cover to a point $C\to BG$ in $\Mbar_{g,n}(BG)$, in order to define a Belyi map for $D$ compatible with the action of $G$ one has to have a commutative diagram
$$\xymatrix{
D \ar[d]_{G-equiv} \ar[r] & C\simeq D/G \ar[d]^{b} \\
\bP^{1}\ar[r] & \bP^{1}\simeq \bP^{1}/G
}$$
and since the $G$-action on $\bP^{1}$ has to fix the points $0,1$ and $\infty$, then $G$ is acting trivially on $\bP^{1}$ and therefore the quotient stack $[\bP^{1}/G]$ is isomorphic to $\bP^{1}\times BG$. Thus, we define a clean $G$-Belyi morphism of type $(g,n)$ as a pair
$$\xymatrix{C \ar[d]_{b}\ar[r]^{f}& BG\\ \bP^{1}&}$$
where $b\colon C \to \bP^{1}$ is a clean Belyi morphism and $f\colon C \to BG$ is a point in $\cM_{g,n}(BG)$. An orbifold $G$-\emph{dessin} is a representable map $b^{-1}([1,i\infty]) \to BG$.

\end{rem}

\subsection{Intersection numbers of $\Mbar_{g,n}(BG)$ and orbifold DVV equation}\quad 

Let us now briefly comment how to relate these results with the Gromov-Witten intersection numbers of $\Mbar_{g,n}(BG)$.

The \emph{$n$-point correlators} are defined in the usual way:
\begin{equation}\label{e:intG}
\langle\tau_{k_1}(e_{\cl{r_1}})\dots\tau_{k_n}(e_{\cl{r_n}})\rangle^{G}_{g,n}
:= \int_{\Mbar_{g,n}(BG)} \prod_{i=1}^{n} \bar \psi_i^{k_i} ev_i^*(e_{\cl{r_{i}}})\,,
\end{equation}
where the tautological cotangent classes $\bar \psi_i: = \varphi^*\psi_i$ are defined by pulling back the standard $\psi$-classes by the canonical projection $\varphi \colon \Mbar_{g, n}(BG) \to \Mbar_{g,n}$. In \cite{JK} it is proven that
\begin{equation}\label{e:tauG}
\langle\tau_{k_1}(e_{\cl{r_1}})\dots\tau_{k_n}(e_{\cl{r_n}})\rangle^{G}_{g,n}
= \Omega^G_g (\bold r)\langle\tau_{k_1}\dots \tau_{k_n}\rangle_{g,n}\,
\end{equation}
where $\langle\tau_{k_1}\dots \tau_{k_n}\rangle_{g,n}$ are the standard $n$-point correlators for $\Mbar_{g,n}$.

\begin{rem}
In \cite{JK} it is show that the intersection numbers on $\mgnbar(BG)$ satisfies the Virasoro constrain condition,  they use a change of basis on the Frobenius algebra to show that the result consist of $h$ copies of Witten-Kontsevich theory of a point, where $h$ is the number of conjugacy classes of $G$ (recall that $BG$ is just $h$ copies of a point). Even thought we can straightforward generalize the results of \cite{DMSS} to get $h$ copies of a Eynard-Orantin topological recursion, we shall use the approach of the previous sections: even if the intersection numbers on $\mgnbar(BG)$ are just a scalar multiple of the intersection numbers on $\mgnbar$, inside the structure there is a new theory of a CohFT-valued topological recursion we want to point out and which shall produce an orbifold generalization of the DVV equation instead of $h$ copies of the standard DVV equation.
\end{rem}

Let $N_{g,n}(\mu_1,\dots ,\mu_n)$ be the lattice point counting function of \cite{MP2012,DMSS}. By \cite[Theorem 1.3]{MP2012}, the leading terms of the Laplace transform, $F_{g,n}^{L}(t_1,\dots,t_n),$ of the number $N_{g,n}(\mu_1,\dots,\mu_n)$ of ribbon graphs of integer edge lengths $(\mu_{1},\dots, \mu_{n})$ with a cilium placed on a labeled face,
form a homogeneous polynomial of degree
$3(2g-2+n)$ given by
\begin{equation}
\label{eq:FgnK}
{F}_{g,n}^{K}(t_1,\dots,t_n)
=
\frac{(-1)^n}{2^{2g-2+n}}
\sum_{\substack{k_1+\cdots+k_n\\=3g-3+n}}
\langle\tau_{k_1}\dots\tau_{k_n}\rangle_{g,n}
\prod_{j=1}^n (2k_j-1)!! \left(
\frac{t_j}{2}
\right)^{2k_j+1},
\end{equation}

If we denote by $\cN_{g,n}(\mu_{1},\dots, \mu_{n};v_{\cl{r_{1}}}, \dots ,v_{\cl{r_{n}}})$ the number of ribbon graphs of integer edge lengths $(\mu_{1},\dots, \mu_{n})$ with a cilium placed on a labeled face and where faces are also decorated by $(v_{\cl{r_{1}}}, \dots ,v_{\cl{r_{n}}})\in A^{\otimes n}$, then this number is just the product 
$$N_{g,n}(\mu_{1},\dots, \mu_{n})\Omega^{G}_{g,n}(v_{\cl{r_{1}}}, \dots ,v_{\cl{r_{n}}})$$
and the multilinear map
$$\cN_{g,n}(\mu_{1},\dots, \mu_{n})=N_{g,n}(\mu_{1},\dots, \mu_{n})\cdot \Omega^{G}_{g,n}\colon A^{\otimes n}\to \bC$$
satisfies the following ECA based equation 
\begin{multline}
\label{eq:NG}
\mu_1  \cN_{g,n}(\mu_{1},\dots, \mu_{n};v_{\cl{r_{1}}}, \dots ,v_{\cl{r_{n}}})
=\\
=\half
\sum_{j=2} ^n 
\Bigg[
\sum_{q=0} ^{\mu_1+\mu_j}
q(\mu_1+\mu_j-q)  \cN_{g,n-1}(q,\mu_{[n]\setminus\{1,j\}};v_{\cl{r_{1}}}v_{\cl{r_{j}}},v_{\cl{r_{2}}},\dots,\widehat{v_{\cl{r_{j}}}},\dots,v_{\cl{r_{n}}})
\\
+
H(\mu_1-\mu_j)\sum_{q=0} ^{\mu_1-\mu_j}
q(\mu_1-\mu_j-q)\cN_{g,n-1}(q,\mu_{[n]\setminus\{1,j\}};v_{\cl{r_{1}}}v_{\cl{r_{j}}},v_{\cl{r_{2}}},\dots,\widehat{v_{\cl{r_{j}}}},\dots,v_{\cl{r_{n}}})
\\
-
H(\mu_j-\mu_1)\sum_{q=0} ^{\mu_j-\mu_1}
q(\mu_j-\mu_1-q)
\cN_{g,n-1}(q,\mu_{[n]\setminus\{1,j\}};v_{\cl{r_{1}}}v_{\cl{r_{j}}},v_{\cl{r_{2}}},\dots,\widehat{v_{\cl{r_{j}}}},\dots,v_{\cl{r_{n}}})
\Bigg]
\\
+\half \sum_{0\le q_1+q_2\le \mu_1}q_1q_2(\mu_1-q_1-q_2)
\Bigg[
\cN_{g-1,n+1}(q_1,q_2,\mu_{[n]\setminus\{1\}};\delta(v_{\cl{r_{1}}}),v_{\cl{r_{2}}},\dots,v_{\cl{r_{n}}})
\\
+\sum_{\substack{g_1+g_2=g\\
I\sqcup J=[n]\setminus\{1\}}} ^{\rm{stable}}
\delta^{*}\big(\cN_{g_1,|I|+1}(q_1,\mu_I;\_,v_{\cl{I}})\otimes 
\cN_{g_2,|J|+1}(q_2,\mu_J;\_,v_{\cl{J}})\big)(v_{\cl{r_1}})\Bigg].
\end{multline}
Where $
H(x) = \begin{cases}
1 \qquad x>0\\
0 \qquad x\le 0
\end{cases}
$
is the Heaviside function. This generalizes \cite[Theorem 3.3]{CMS}. The leading terms of the Laplace transform of $$\cN_{g,n}(\mu_{1},\dots, \mu_{n};v_{\cl{r_{1}}}, \dots ,v_{\cl{r_{n}}})$$ has then the shape
\begin{equation}
\label{eq:FgnKGbga}
\begin{aligned}
{\cF}_{g,n}^{K}&(t_1,\dots,t_n;v_{\cl{r_{1}}}, \dots ,v_{\cl{r_{n}}})
= \\
&=
\frac{(-1)^n}{2^{2g-2+n}}
\sum_{\substack{k_1+\cdots+k_n\\=3g-3+n}}
\langle\tau_{k_1}\dots\tau_{k_n}\rangle_{g,n}\Omega^{G}_{g,n}(v_{\cl{r_{1}}}, \dots ,v_{\cl{r_{n}}})
\prod_{j=1}^n (2k_j-1)!! \left(
\frac{t_j}{2}
\right)^{2k_j+1},
\end{aligned}
\end{equation}
where by equation (\ref{e:tauG}) we have
$$\langle\tau_{k_1}(e_{\cl{r_1}})\dots\tau_{k_n}(e_{\cl{r_n}})\rangle^{G}_{g,n}
= \langle\tau_{k_1}\dots \tau_{k_n}\rangle_{g,n}\Omega^G_g (v_{\cl{r_{1}}}, \dots ,v_{\cl{r_{n}}})\,,$$ 
Applying the Laplace transform method to ECA based equation (\ref{eq:NG}) and restricting to the top degree terms, the Frobenius algebra twisted topological recursion produces in this case the following orbifold DVV type equation
 \begin{equation}\label{e:DDV(BG)}
\begin{aligned}
& \la \tau_{k_{1}}(e_{\cl{r_1}})\cdots \tau_{k_{n}}(e_{\cl{r_n}}) \ra_{g,n}^G=\sum_{j=2}^{n} \frac{(2k_{1}+2k_{j}-1)!!}{(2k_{1}-1)!!(2k_{j}-1)!!} \\
&\times \la \tau_{k_{1}+k_{j}-1}(e_{\cl{r_{1}}}e_{\cl{r_j}})\tau_{k_{2}}(e_{\cl{r_2}})\cdots \widehat {\tau_{k_{j}}(e_{\cl{r_j}})}\cdots \tau_{k_{n}}(e_{\cl{r_n}}) \ra_{g,n}^G +\\
&+ \frac{1}{2} \sum_{l+m=k_{1}-2}\frac{(2l+1)!! (2m+1)!!}{(2k_{1}+1)!!}\sum_{\cl{r_l},\cl{r_{m}},\cl{a},\cl{b}}\phi(v_{\cl{r_1}},e_{\cl{r_l}},e_{\cl{r_{m}}})\eta^{ka}\eta^{{\ell}b}
\\
& \times \bigg( \la \tau_{l}\tau_{m}\tau_{k_{[n]\setminus \{1\}}} \ra_{g-1,n+1}\Omega^G_{g-1,n+1}(e_{\cl{a}},e_{\cl{b}},e_{\cl{\bold r_{[n]\setminus \{1\}}}})   +  \\
&+ \sum_{\substack{g_{1}+g_{2}=g\\ I\sqcup J=[n]\setminus \{1\}}}^{\text{stable}}\la \tau_l\tau_{k_I}\ra _{g_1,|I|+1}\Omega^G_{g_1,|I|+1}(e_{\cl{a}},e_{\cl{\bold r_I}}) \la \tau_m\tau_{k_J}\ra _{g_2,|J|+1}\Omega^G_{g_2,|J|+1}(e_{\cl{b}},e_{\cl{\bold r_J}}) \bigg)
\end{aligned}
\end{equation}
which we can symbolically write as
\begin{equation}\label{e:DDV(BG)simb}
\begin{aligned}
& \la \tau_{k_{1}}(e_{\cl{r_1}})\cdots \tau_{k_{n}}(e_{\cl{r_n}}) \ra_{g,n}^G=\sum_{j=2}^{n} \frac{(2k_{1}+2k_{j}-1)!!}{(2k_{1}-1)!!(2k_{j}-1)!!} \\
&\times \la \tau_{k_{1}+k_{j}-1}(e_{\cl{r_{1}}}e_{\cl{r_j}})\tau_{k_{2}}(e_{\cl{r_2}})\cdots \widehat {\tau_{k_{j}}(e_{\cl{r_j}})}\cdots \tau_{k_{n}}(e_{\cl{r_n}}) \ra_{g,n}^G +\\
&+ \frac{1}{2} \sum_{l+m=k_{1}-2}\frac{(2l+1)!! (2m+1)!!}{(2k_{1}+1)!!}
\\
& \times \bigg( \delta^{*}\big( \la \tau_{l}\tau_{m}\tau_{k_{[n]\setminus \{1\}}}(e_{\cl{\bold r_{[n]\setminus \{1\}}}}) \ra^{G}_{g-1,n+1} \big)(e_{\cl{r_{1}}})  +  \\
&+ \sum_{\substack{g_{1}+g_{2}=g\\ I\sqcup J=[n]\setminus \{1\}}}^{\text{stable}}\delta^{*}\big (\la \tau_l(\_)\tau_{k_I}(e_{\cl{\bold r_I}})\ra _{g_1,|I|+1}^G, \la \tau_m(\_)\tau_{k_J}(e_{\cl{\bold r_J}})\ra _{g_2,|J|+1}^G \big) (e_{\cl{r_{1}}})\bigg)
\end{aligned}
\end{equation}
This proves from a different perspective to that in \cite{JK} that the intersections numbers on $\Mbar_{g, n}(BG)$ satisfies the Virasoro constrain condition.

\section{Appendix: proof of Theorem \ref{thm:main2}}\label{s:proof}

We give in this section the proof of Theorem \ref{thm:main2} to make clear how to use the ECA axioms and properly compute the contributions coming from the unstable geometries. We shall reproduce the proof of \cite[Theorem 4.3]{DMSS} given in \cite[Appendix A]{DMSS} (which states the EO topological recursion satisfied by $W_{g,n}^{D}(t_1,\dots,t_n)$) and adapt that result for the twisted differentials 
$$\cW_{g,n}(t_1,\dots,t_n;v_{1},\dots ,v_{n})=W^{D}_{g,n}(t_1,\dots,t_n)\Omega_{g,n}(v_{1},\dots ,v_{n})\,.$$

Let us separate first the contributions from
unstable geometries $(g,n)=(0,1)$ and $(0,2)$ in the second term of the third line of 
equation (\ref{eq:LT of Dgn}). Using equation (\ref{ECA2-2}) of ECA2 axiom for $g_1=0$ and $I=\emptyset$, or $g_2=0$ and $J=\emptyset$ we have a contribution of 
\begin{multline}
\delta^{*}\big(\cw_{0,1}(x_1;-),
  \cw_{g,n}(x_{[n]};-,v_{[n]\setminus \{1\}})\big)(v_{1})+
 \delta^{*} \big (\cw_{g,n}(x_{[n]};-,v_{[n]\setminus \{1\}}),\cw_{0,1}(x_1;-)\big)(v_{1})\\
=2 w_{0,1}(x_1)w_{g,n}(x_1,x_2,\dots,x_n)\Omega_{g,n}(v_{1},\dots ,v_{n}) .
\end{multline}

Similarly, for $g_1=0$ and $I=\{j\}$, or $g_2=0$ and $J=\{j\}$, 
we have
\begin{multline}
\sum_{j=2}^n \delta^{*}\Big( \cw_{0,2}(x_1,x_j;-,v_{j}),
\cw_{g,n-1}(x_1,x_{_{[n]\setminus \{1,j\}}};-,v_{[n]\setminus \{1,j\}})\Big)(v_{1})=\\
=\sum_{j=2}^n w_{0,2}(x_1,x_j)w_{g,n-1}(x_1,\dots,\widehat{x_j},\dots,x_n)\Omega_{g,n}(v_{1},\dots ,v_{n}).
\end{multline}

Thus, bearing in mind that
\begin{align*}
w_{0,1}(x)&=-\frac{t+1}{t-1}\\
w_{0,2}(x_1,x_2)&=
\frac{1}{(t_1+t_2)^2}\;\frac{(t_1^2-1)^2}{8t_1}
\;\frac{(t_2^2-1)^2}{8t_2}\\
w_{g,n}(x_1,\dots,x_n)&=
(-1)^n w_{g,n}^D(t_1,\dots,t_n)\prod_{i=1}^n
\frac{(t_i^2-1)^2}{8t_i}\,,
\end{align*}
the differential equation (\ref{eq:LT of Dgn}) is equivalent to 
\begin{multline*}
 2\left(\frac{t_1^2+1}{t_1^2-1}-\frac{t_1+1}{t_1-1}
 \right)
\cw_{g,n}(t_1,\dots,t_n;v_{1},\dots ,v_{n})
\\
=
\sum_{j=2}^n
\Bigg(
\frac{(t_1^2-1)^2(t_j^2-1)^2}{16 (t_1^2-t_j^2)^2}\;
    \frac{8t_j}{(t_j^2-1)^2}\;
    \cw_{g,n-1}(t_1,\dots,\widehat{t_j},\dots,t_n;v_{1}\cdot v_{j},v_{[n]\setminus \{1,j\}})
\\
    +
    \frac{\partial}{\partial t_j}
\left(
    \frac{(t_1^2-1)(t_j^2-1)}{4 (t_1^2-t_j^2)}
     \frac{8t_1}{(t_1^2-1)^2}
     \frac{(t_j^2-1)^2}{8t_j}
         \cw_{g,n-1}(t_2,\dots,t_n;v_{1}\cdot v_{j},v_{[n]\setminus \{1,j\}})
         \right)
     \Bigg)
\\
+
\frac{(t_1^2-1)^2}{8t_1}
\bigg(
\delta^{*}\big(\cw_{g-1,n+1}(t_1,t_1,t_2,\dots,t_n;-,-,v_{[n]\setminus \{1\}})\big)(v_{1}) \\
+\sum_{\substack{g_1+g_2=g\\
I\sqcup J=\{2,\dots,n\}}} ^{\text{stable}}
\delta^{*}\big(\cw_{g_1,|I|+1}(t_1,t_I;-,v_{I}),
\cw_{g_2,|J|+1}(t_1,t_J;-,v_{J})\big)(v_{1})
\bigg)
\\
+
2\sum_{j=2}^n
\frac{1}{(t_1+t_j)^2}\;\frac{(t_1^2-1)^2}{8t_1}
w_{g,n-1}^D(t_1,\dots,\widehat{t_j},\dots,t_n)\Omega_{g,n}(v_{1},\dots ,v_{n})
\\
=
\sum_{j=2}^n
\Bigg(
\left(
\frac{ t_j(t_1^2-1)^2}{2 (t_1^2-t_j^2)^2}
     +  
\frac{1}{(t_1+t_j)^2}\;\frac{(t_1^2-1)^2}{4t_1}
\right)
\cw_{g,n-1}(t_1,\dots,\widehat{t_j},\dots,t_n;v_{1}\cdot v_{j},v_{[n]\setminus \{1,j\}})
\\
    +
    \frac{t_1}{t_1^2-1}\;
    \frac{\partial}{\partial t_j}
\left(
    \frac{(t_j^2-1)^3}{4t_j (t_1^2-t_j^2)}\;
         \cw_{g,n-1}(t_2,\dots,t_n;v_{1}\cdot v_{j},v_{[n]\setminus \{1,j\}})
         \right)
     \Bigg)
     \\
+
\frac{(t_1^2-1)^2}{8t_1}
\bigg(
\delta^{*}\big(\cw_{g-1,n+1}(t_1,t_1,t_2,\dots,t_n;-,-,v_{[n]\setminus \{1\}})\big)(v_{1})\\
+\sum_{\substack{g_1+g_2=g\\
I\sqcup J=\{2,\dots,n\}}} ^{\text{stable}}
\delta^{*}\big(\cw_{g_1,|I|+1}(t_1,t_I;-,v_{I}),
\cw_{g_2,|J|+1}(t_1,t_J;-,v_{J})\big)(v_{1})
\bigg).
\end{multline*}
Where we have used ECA1 equation (\ref{ECA1}) to convert the second and sixth lines into the seventh line.

Since
$$
 2\left(\frac{t_1^2+1}{t_1^2-1}-\frac{t_1+1}{t_1-1}
 \right)
=
-\frac{4t_1}{t_1^2-1},
$$
we obtain
\begin{multline}
\label{eq:D-diff}
\cw_{g,n}(t_1,\dots,t_n;v_{1},\dots ,v_{n})
=\\
=-\sum_{j=2}^n
\Bigg(
    \frac{\partial}{\partial t_j}
\left(
    \frac{(t_j^2-1)^3}{16t_j (t_1^2-t_j^2)}\;
         \cw_{g,n-1}(t_2,\dots,t_n;v_{1}\cdot v_{j},v_{[n]\setminus \{1,j\}})
         \right)
     \\
      +
  \frac{(t_1^2-1)^3}  {16 t_1^2}
  \;
  \frac{t_1^2+t_j^2}{(t_1^2-t_j^2)^2}
\cw_{g,n-1}(t_1,\dots,\widehat{t_j},\dots,t_n;v_{1}\cdot v_{j},v_{[n]\setminus \{1,j\}})
     \Bigg)
     \\
- \frac{(t_1^2-1)^3}{32 t_1^2}
\bigg( \delta^{*}\big (\cw_{g-1,n+1}(t_1,t_1,t_2,\dots,t_n;-,-,v_{[n]\setminus \{1\}})\big)(v_{1})
\\
+\sum_{\substack{g_1+g_2=g\\
I\sqcup J=\{2,\dots,n\}}} ^{\text{stable}}
\delta^{*}\big(\cw_{g_1,|I|+1}(t_1,t_I;-,v_{I}),
\cw_{g_2,|J|+1}(t_1,t_J;-,v_{J})\big)(v_{1})
\bigg).
\end{multline}

Now let us compute the integral (notice that we are already separating the unstable $(0,2)$ differential forms and substituting the kernel by its value (\ref{eq:Gkernel})). 

\begin{multline}
\label{eq:DEO-A}
\cW_{g,n}(t_1,\dots,t_n;v_{1},\dots ,v_{n})
=
-\frac{1}{64} \; 
\frac{1}{2\pi i}\int_\phi
\left(
\frac{1}{t+t_1}+\frac{1}{t-t_1}
\right)
\frac{(t^2-1)^3}{t^2}\cdot \frac{1}{dt}\cdot dt_1
\\
\times
\Bigg[
\sum_{j=2}^n
\bigg(
\delta^{*}\big(\cW_{0,2}(t,t_j;-,v_{j}),\cW_{g,n-1}(-t,t_{[n]\setminus \{1,j\}};-,v_{[n]\setminus \{1,j\}})\big)(v_{1})
+\\
\delta^{*}\big(\cW_{0,2}(-t,t_j;-,v_{j}),\cW_{g,n-1}(t,t_{[n]\setminus \{1,j\}};-,v_{[n]\setminus \{1,j\}})\big)(v_{1})
\bigg)
\\
+
\delta^{*}\big(\cW_{g-1,n+1}(t,{-t},t_2,\dots,t_n;-,-,v_{[n]\setminus \{1\}})\big)(v_{1})\\
+
\sum^{\text{stable}} _
{\substack{g_1+g_2=g\\I\sqcup J=\{2,3,\dots,n\}}}
\delta^{*}\big(\cW_{g_1,|I|+1}(t,t_I;-,v_{I}),\cW_{g_2,|J|+1}({-t},t_J;-,v_{J})\big)(v_{1})
\Bigg].
\end{multline}
Recall that for $2g-2+n>0$, $w_{g,n}^D(t_1,\dots,t_n)$
is a Laurent polynomial in $t_1^2,\dots,t_n^2$.
Thus the last two lines of (\ref{eq:DEO-A}) are immediately calculated because the integration contour $\phi$ encloses $\pm t_1$ and  contributes residues with the negative sign.
 The result is exactly the last two lines of (\ref{eq:D-diff}).
Similarly, since
\begin{multline*}
\delta^{*}\big(\cW_{0,2}(t,t_j;-,v_{j}),\cW_{g,n-1}(-t,t_{[n]\setminus \{1,j\}};-,v_{[n]\setminus \{1,j\}})\big)(v_{1})
+\\
+\delta^{*}\big(\cW_{0,2}(-t,t_j;-,v_{j}),\cW_{g,n-1}(t,t_{[n]\setminus \{1,j\}};-,v_{[n]\setminus \{1,j\}})\big)(v_{1})=\\
=
-\left(
\frac{1}{(t+t_j)^2}+\frac{1}{(t-t_j)^2}
\right)\Omega_{g,n}(v_{1},\dots ,v_{n})
w_{g,n-1}^D(t,t_2,\dots,\widehat{t_j},\dots,t_n)
\;dt\;dt\;dt_2\cdots\widehat{dt_j}\cdots dt_n,
\end{multline*}
the residues at $\pm t_1$ contributes 
$$
-\frac{(t_1^2-1)^3(t_1^2+t_j^2)}
{16 t_1^2(t_1^2-t_j^2)^2}\;
w_{g,n-1}^D(t_1,\dots,\widehat{t_j},\dots,t_n)\Omega_{g,n}(v_{1},\dots ,v_{n}).
$$
which, by ECA1 axiom of equation (\ref{ECA1}) is equal to:
$$
-\frac{(t_1^2-1)^3(t_1^2+t_j^2)}
{16 t_1^2(t_1^2-t_j^2)^2}\;
w_{g,n-1}^D(t_1,\dots,\widehat{t_j},\dots,t_n)\Omega_{g,n-1}(v_{1}\cdot v_{j},v_{[n]\setminus \{1,j\}}).
$$
This is the same as the second line of the 
right-hand side of (\ref{eq:D-diff}).

Within the contour $\gam$, there are second order poles at $\pm t_j$ for each $j\ge 2$ which come from $(0,2)$ unstable cases, using ECA 1 and ECA 2 axioms of equations (\ref{ECA1}) and (\ref{ECA2-2}) respectively, we calculate
\begin{multline*}
\frac{1}{64} \; 
\frac{1}{2\pi i}\int_\gam
\left(
\frac{1}{t+t_1}+\frac{1}{t-t_1}
\right)
\frac{(t^2-1)^3}{t^2}\\
\sum_{j=2}^n
\bigg(
\delta^{*}\big(\cw_{0,2}(t,t_j;-,v_{j}),\cw_{g,n-1}(-t,t_{[n]\setminus \{1,j\}};-,v_{[n]\setminus \{1,j\}})\big)(v_{1})+
\\
+
\delta^{*}\big(\cw_{0,2}(-t,t_j;-,v_{j}),\cw_{g,n-1}(t,t_{[n]\setminus \{1,j\}};-,v_{[n]\setminus \{1,j\}})\big)(v_{1})
\bigg)
\\
=
-\frac{1}{32} 
\frac{\partial}{\partial t_j}
\left(
\left(
\frac{1}{t_j+t_1}+\frac{1}{t_j-t_1}
\right)
\frac{(t_j^2-1)^3}{t_j^2}
w_{g,n-1}^D(t_j,t_2,\dots,\widehat{t_j},\dots,t_n)\Omega_{g,n}(v_{1},\dots ,v_{n})
\right)
\\
=
-\frac{1}{16}
\frac{\partial}{\partial t_j}
\left(
\frac{1}{t_j^2-t_1^2}\;
\frac{(t_j^2-1)^3}{t_j}
w_{g,n-1}^D(t_j,t_2,\dots,\widehat{t_j},\dots,t_n)\Omega_{g,n-1}(v_{1}\cdot v_{j},v_{[n]\setminus \{1,j\}})
\right).
\end{multline*}
This gives the first line of the right-hand side
of (\ref{eq:D-diff}).
We have thus
 completed the proof of Theorem \ref{thm:main2}.

\begin{ack}
The author wish to thank Olivia Dumitrescu, Motohico Mulase and Takashi Kimura for useful and valuable discussions.
\end{ack}


\providecommand{\bysame}{\leavevmode\hbox to3em{\hrulefill}\thinspace}

\bibliographystyle{amsplain}

\end{document}